\theoremstyle{plain}
\newtheorem{theorem}{Theorem}[section]
\newtheorem{lemma}[theorem]{Lemma}
\newtheorem{cor}[theorem]{Corollary}
\newtheorem{prop}[theorem]{Proposition}
\theoremstyle{remark}
\theoremstyle{definition}
\newtheorem{defn}[theorem]{Definition}
\newtheorem{Remark}[theorem]{Remark}
\numberwithin{equation}{section}
\numberwithin{theorem}{section}
\newcommand{\ZZ}{\mathbb Z}
\newcommand{\RR}{\mathbb R}
\newcommand\Adm{{\rm Adm}}
\newcommand\hfld[2]{\smash{\mathop{\hbox to 10mm{\rightarrowfill}}
     \limits^{\scriptstyle#1}_{\scriptstyle#2}}}
\newcommand\hflg[2]{\smash{\mathop{\hbox to 10mm{\leftarrowfill}}
     \limits^{\scriptstyle#1}_{\scriptstyle#2}}}
\title{Vertexwise criteria for admissibility of alcoves}
\author{Thomas J. Haines}
\address{University of Maryland\\
Department of Mathematics\\
College Park, MD 20742-4015 U.S.A.}
\email{tjh@math.umd.edu}
\author{Xuhua He}
\address{University of Maryland\\
Department of Mathematics\\
College Park, MD 20742-4015 U.S.A.\\ and department of mathematics, HKUST, Hong Kong}
\email{xuhuahe@math.umd.edu}
\begin{document}

\thanks{Research of T.H.~partially supported by NSF DMS-1406787 and by the MSRI Fall 2014 program on {\em Geometric Representation Theory}.}

\date{}

\maketitle

\begin{abstract} We give a new description of the set ${\rm Adm}(\mu)$ of admissible alcoves as an intersection of certain ``obtuse cones'' of alcoves, and we show this description may be given by imposing conditions vertexwise. We use this to prove the vertexwise admissibility conjecture of Pappas-Rapoport-Smithling \cite[Conj.~4.39]{PRS}. The same idea gives simple proofs of two ingredients used in the proof of the Kottwitz-Rapoport conjecture on existence of crystals with additional structure \cite{He}.
\end{abstract}

\bigskip
 


\section{Introduction}

\subsection{Background}
Let $\Sigma = (X^*, X_*, R, R^\vee, \Pi)$ be a based reduced root system with simple roots $\Pi$ and with extended affine Weyl group $\widetilde{W}$. Let $\mu \in X_*$. In \cite{KR}, Kottwitz and Rapoport introduced two finite subsets, ${\rm Adm}(\mu)$ and ${\rm Perm}(\mu)$, of $\widetilde{W}$.  Roughly speaking $\widetilde{W}$ can be identified with the set of alcoves in the apartment $V = X_* \otimes \mathbb R$. The set ${\rm Adm}(\mu)$ is then defined by using the Bruhat order on the set of alcoves, and ${\rm Perm}(\mu)$ is defined by imposing certain vertexwise conditions on alcoves (see Definition \ref{KR_defn}). Originally, these sets were introduced because of their expected relation to special fibers of Rapoport-Zink local models \cite{RZ}. Namely, if $(\Sigma, \mu)$ comes from the data $(G, \{\mu \})$ defining a local model $M^{\rm loc}$ with Iwahori-level structure, then ${\rm Perm}(\mu)$ was expected to parametrize a natural stratification of the special fiber of $M^{\rm loc}$ and ${\rm Adm}(\mu)$ was expected to parametrize those strata in the closure of the generic fiber of $M^{\rm loc}$. The set ${\rm Adm}(\mu)$ also arises naturally as the support of the Bernstein function $z_\mu$ in the center of the affine Hecke algebra with parameters attached to $\Sigma$ (cf.~\cite[Prop.~14.3.1]{HR2}).

The inclusion ${\rm Adm}(\mu) \subseteq {\rm Perm}(\mu)$ was proved in \cite{KR}, and Kottwitz and Rapoport raised in \cite{KR} the question of when equality holds. In the cases where the equality ${\rm Adm}(\mu) = {\rm Perm}(\mu)$ could be proved, it has been used to establish the topological flatness of $M^{\rm loc}$ (cf.~G\"{o}rtz \cite{Go1, Go2, Go3} and Smithling \cite{Sm2, Sm3, Sm4, Sm1}). Recently Pappas and Zhu \cite{PZ} defined group-theoretic ``local models'' attached to any pair $(G, \{ \mu \})$ where $G$ is a tamely ramified group over a $p$-adic field, and showed that the strata in the special fiber are parametrized by the $\{\mu\}$-admissible set. 

The equality ${\rm Adm}(\mu) = {\rm Perm}(\mu)$ in general is too much to hope for: although it holds for all $\mu$ if $\Sigma$ has type A, it fails for $\Sigma$ irreducible of rank $\geq 4$ and not of type A, for all sufficiently regular $\mu$ \cite{HN}. But this does not cover all the cases where $\mu$ is minuscule (of interest for most local models) or where $\mu$ is a sum of minuscule cocharacters.  In fact one could hope (see \cite[after Thm.~3.4]{Ra05}) that ${\rm Adm}(\mu) = {\rm Perm}(\mu)$ whenever $\mu$ is minuscule. If $\Sigma$ only involves types A, B, C, or D, this was proved by Kottwitz-Rapoport \cite{KR} (Types A and C) and Smithling \cite{Sm2, Sm1} (Types B and D). However, the article of Lisa Sauermann \cite{Sau} shows that the equality ${\rm Adm}(\mu) = {\rm Perm}(\mu)$ fails for the minuscule cocharacters attached to the exceptional types ${\rm E}_6$ and ${\rm E}_7$.


The situation vis-\`{a}-vis ${\rm Perm}(\mu)$ having been clarified, the goal of this article is to prove that nevertheless ${\rm Adm}(\mu)$ can always be described by vertexwise conditions, and to use this to prove the vertexwise admissibility conjecture of Pappas-Rapoport-Smithling (\cite[Conjecture 4.2]{PR} and \cite[Conjecture 4.39]{PRS}). 

\subsection{The notion of strong admissibility}
The starting point is a new description of ${\rm Adm}(\mu)$ given in terms of ``obtuse cones'' of alcoves. Let $\mathcal C$ be the dominant Weyl chamber in $V = X_* \otimes \mathbb R$ with closure $cl(\mathcal C)$, and let $\bar{\mathcal C}$ be the antidominant Weyl chamber. Let $\bar{\bf a}$ be the base alcove in $\bar{\mathcal C}$. Fix a Weyl group element $w \in W$ and $\mu \in X_* \cap cl(\mathcal C)$ with $t_\mu \in W_{\rm aff}\tau_\mu$ (see $\S\ref{notation_sec}$). 
\begin{defn} \label{B_alc_defn}
Let $B(t_{w\mu}(\bar{\bf a}), w)$ be the set of $x \in W_{\rm aff}\tau_\mu$ such that there is a sequence of affine roots $\tilde{\alpha}_1, \dots, \tilde{\alpha}_r$ and corresponding reflections $s_{\tilde{\alpha}_1}, \dots, s_{\tilde{\alpha}_r}$ with
\begin{enumerate}
\item[(1)] $s_{\tilde{\alpha}_r} \cdots s_{\tilde{\alpha}_1} t_{w\mu}(\bar{\bf a}) = x(\bar{\bf a})$;
\item[(2)] Every reflection is in the $w$-{\em opposite direction}, that is, for each $i$, $s_{\tilde{\alpha}_{i-1}} \cdots s_{\tilde{\alpha}_1} t_{w\mu}(\bar{\bf a})$ and ${\bf b}$ are on opposite sides of the affine root hyperplane $H_{\tilde{\alpha}_i}$, where ${\bf b}$ is any sufficiently regular alcove ${\bf b} \subset w\bar{\mathcal C}$.
\end{enumerate}
Equivalently, $B(t_{w\mu}(\bar{\bf a}), w)$ is the set of $ x \in W_{\rm aff}\tau_\mu$ such that $x(\bar{\bf a}) \leq_{\bf b} t_{w\mu}(\bar{\bf a})$ for all sufficiently regular ${\bf b} \subset w\bar{\mathcal C}$. Here $\leq_{\bf b}$ denotes the Bruhat order on the set of alcoves determined by ${\bf b}$. 
\end{defn}
This set $B(t_{w\mu}(\bar{\bf a}), w)$ corresponds to an ``obtuse cone'' of alcoves. We then define the new set of {\em strongly-admissible} elements
$$
{\rm Adm}^{st}(\mu) = \bigcap_{w \in W} B(t_{w\mu}(\bar{\bf a}), w).
$$
This is reminiscent of the set ${\rm Perm}^{st}(\mu)$ of {\em strongly-permissible} elements introduced in \cite{HN}, where similar conditions were imposed vertexwise (see Definition \ref{J_defs}). The key result of \cite{HN} is that the inclusion ${\rm Perm}^{st}(\mu) \subseteq {\rm Adm}(\mu)$ always holds.  Clearly, we also have ${\rm Adm}^{st}(\mu) \subseteq {\rm Perm}^{st}(\mu)$. Thus we have
\begin{equation*} 
{\rm Adm}^{st}(\mu) \subseteq {\rm Perm}^{st}(\mu) \subseteq {\rm Adm}(\mu).
\end{equation*}
Our main result, Theorem \ref{adm^st-J=adm}, asserts that these inclusions are equalities, and that in fact stronger parahoric equalities hold (which are needed to prove the vertexwise admissibility conjecture). In particular, as ${\rm Adm}(\mu)$ coincides with ${\rm Perm}^{st}(\mu)$, it is defined by vertexwise conditions. 

\subsection{Statement of main theorem} 

In order to state the theorem we need to define parahoric analogues of the above notions. Let ${\rm Vert}(\bar{\bf a})$ denote the set of vertices for $\bar{\bf a}$. For $J \subseteq S_{\rm aff}$ any subset of simple affine reflections, let $W_J$ be the subgroup of $W_{\rm aff}$ generated by the $s_{\tilde{\alpha}} \in J$.  Given a vertex $a \in {\rm Vert}(\bar{\bf a})$, we define its {\em type} to be the set of simple reflections $s_{\tilde{\alpha}} \in S_{\rm aff}$ which do not fix $a$; similarly, we say $a$ has {\em type not in $J$} if $a$ is fixed by every $s_{\tilde{\alpha}}$ in $J$.

\begin{defn} \label{J_defs} For any subset $J \subseteq S_{\rm aff}$, define 
\begin{enumerate}
\item[(1)] $\Adm^J(\mu) = W_J {\rm Adm}(\mu) W_J$.
\item[(2)] \label{perm_st-J_def} ${\rm Perm}^{st, J}(\mu)$ is the set of $x \in W_{\rm aff} \tau_\mu$ such that, for every $a \in {\rm Vert}(\bar{\bf a})$ of type not in $J$, and for every $w \in W$, we have $x(a) \in B(t_{w\mu}(a), w)$.
\end{enumerate}
\end{defn}
Here following \cite[Def.~4.1]{HN}, we define $B(t_{w\mu}(a), w)$ to be the set of points we get by applying a sequence of reflections in the $w$-opposite direction to the point $t_{w\mu}(a)$ (comp.~Definition \ref{B_alc_defn} with $\bar{\bf a}$ replaced by $a$).

\begin{theorem}\label{adm^st-J=adm}
Let $\mu$ be a dominant cocharacter, and let $J \subseteq S_{\rm aff}$ be a subset. Then we have the equalities $${\rm Adm}^{st}(\mu) W_J={\rm Perm}^{st, J}(\mu)= {\rm Adm}^J(\mu).$$ In particular, taking $J = \emptyset,$ we obtain ${\rm Adm}^{st}(\mu) = {\rm Perm}^{st}(\mu) = {\rm Adm}(\mu)$.
\end{theorem}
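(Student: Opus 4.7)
The chain ${\rm Adm}^{st}(\mu)W_J \subseteq {\rm Perm}^{st,J}(\mu) \subseteq {\rm Adm}^J(\mu)$ extends the previously noted inclusions ${\rm Adm}^{st}(\mu) \subseteq {\rm Perm}^{st}(\mu) \subseteq {\rm Adm}(\mu)$ to the parahoric setting. The first inclusion is essentially definition-chasing: an alcove-level gallery of $w$-opposite reflections restricts to a valid vertex-level gallery at each vertex of $\bar{\bf a}$, and right multiplication by $W_J$ does not disturb the vertices of $x(\bar{\bf a})$ whose type lies outside $J$. The second inclusion is a parahoric refinement of the main inclusion ${\rm Perm}^{st}(\mu) \subseteq {\rm Adm}(\mu)$ of \cite{HN}, obtained by applying the vertex-wise criterion of loc.\ cit.\ at each non-$J$ vertex and then absorbing the $W_J$-ambiguity on both sides of $x$.

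The essential new content is the reverse inclusion ${\rm Adm}^J(\mu) \subseteq {\rm Adm}^{st}(\mu)W_J$. My plan is to first reduce to the case $J = \emptyset$, using the identity ${\rm Adm}^J(\mu) = W_J {\rm Adm}(\mu) W_J$ together with an equivariance argument showing $W_J {\rm Adm}^{st}(\mu) \subseteq {\rm Adm}^{st}(\mu) W_J$. The theorem then reduces to ${\rm Adm}(\mu) \subseteq {\rm Adm}^{st}(\mu)$: given $x \leq t_{v\mu}$ in the standard Bruhat order for some $v \in W$, I must exhibit, for each $w \in W$, a gallery from $t_{w\mu}(\bar{\bf a})$ to $x(\bar{\bf a})$ all of whose reflections are in the $w$-opposite direction. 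I would build this gallery by concatenating two subgalleries. First, realize the containment $t_{v\mu} \in B(t_{w\mu}(\bar{\bf a}), w)$ by exploiting the extremality of $w\mu$: since $w\mu$ is the vertex of $\conv\{u\mu : u \in W\}$ farthest in the $w\bar{\mathcal C}$ direction, a minimal gallery from $t_{w\mu}(\bar{\bf a})$ to $t_{v\mu}(\bar{\bf a})$ crosses only hyperplanes separating those two alcoves, and each such crossing is $w$-opposite relative to a sufficiently regular ${\bf b} \subset w\bar{\mathcal C}$. Second, continue the gallery from $t_{v\mu}(\bar{\bf a})$ down to $x(\bar{\bf a})$, which reduces to establishing that $B(t_{w\mu}(\bar{\bf a}), w)$ is downward-closed in the standard Bruhat order. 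I would prove this by induction on standard length: at each length-reducing reflection $s_{\tilde{\beta}}$, either append it to the existing $w$-opposite gallery (when it preserves the $w$-opposite condition) or perform a gallery surgery via an exchange-type relation producing a shorter $w$-opposite gallery ending at the new alcove.

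The principal obstacle will be this downward-closure step: it hinges on a delicate compatibility between two a priori unrelated reduction systems, namely the standard length-reducing reflections directed toward $\bar{\bf a} \subset \bar{\mathcal C}$ and the $w$-opposite reflections directed toward $w\bar{\mathcal C}$. Reconciling them requires careful case analysis, based on which affine root hyperplanes separate the relevant alcoves from the regular alcove ${\bf b}$, and is likely to benefit from a root-theoretic reformulation in the spirit of the Haines-Ngô proof. Once the $J = \emptyset$ equality ${\rm Adm}(\mu) = {\rm Adm}^{st}(\mu)$ is in hand, the full parahoric equality follows from the reductions indicated above, completing the proof of the theorem.
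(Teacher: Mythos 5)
Your overall architecture matches the paper's (the same four inclusions appear in some form), but the central step ${\rm Adm}(\mu)\subseteq{\rm Adm}^{st}(\mu)$ rests on a claim that is false: an individual obtuse cone $B(t_{w\mu}(\bar{\bf a}),w)$ is \emph{not} downward closed under the standard Bruhat order $\leq_{\bar{\bf a}}$. Already in type $\widetilde{A}_1$ (walls at the integers, $\bar{\bf a}=(-1,0)$, $\mu=\alpha^\vee$, $w=1$) the cone $B(t_\mu(\bar{\bf a}),1)$ consists exactly of the alcoves $(m,m+1)$ with $m\leq 1$, since every $1$-opposite reflection strictly decreases the left endpoint; it therefore contains $(-5,-4)$, of length $4$, but not $(2,3)$, of length $3$, even though in the infinite dihedral group every element of length $3$ lies below every element of length $4$. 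The obstruction is precisely the one you flag: when $s_{\tilde{\alpha}}x<x$ with $\alpha^\vee\in wR^\vee_+$, the reflection $x(\bar{\bf a})\mapsto s_{\tilde{\alpha}}x(\bar{\bf a})$ is in the $w$-\emph{direction}, and no surgery on the given $w$-opposite gallery can repair this, because the target alcove genuinely leaves the cone. What saves the theorem is that only the \emph{intersection} $\bigcap_{w}B(t_{w\mu}(\bar{\bf a}),w)$ is downward closed: in the bad case one abandons the $w$-gallery entirely, takes the $s_\alpha w$-opposite gallery from $t_{s_\alpha w\mu}(\bar{\bf a})$ to $x(\bar{\bf a})$ (available only because $x$ lies in \emph{all} the cones), conjugates it by $s_{\tilde{\alpha}}$, and prepends the single reflection carrying $t_{w\mu}(\bar{\bf a})$ to $s_{\tilde{\alpha}}t_{s_\alpha w\mu}(\bar{\bf a})$, which one checks is $w$-opposite. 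This interplay between the cones for different $w$ is the heart of Proposition \ref{adm in adm^st} and is absent from your plan. (Your first subgallery, realizing $t_{v\mu}\in B(t_{w\mu}(\bar{\bf a}),w)$ via extremality of $w\mu$, is fine and is essentially Lemma \ref{3planes}.)

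A second, smaller gap: the reduction of the parahoric statement to $J=\emptyset$ via ``$W_J\,{\rm Adm}^{st}(\mu)\subseteq{\rm Adm}^{st}(\mu)W_J$'' is not an equivariance formality. Granting ${\rm Adm}^{st}(\mu)={\rm Adm}(\mu)$, that containment is exactly the compatibility property $\Adm^J(\mu)=\Adm(\mu)W_J$ of \cite[Thm.~6.1]{He}; the difficulty is that left multiplication by $y\in W_J$ moves the base alcove to $y\bar{\bf a}$, and one must show that the resulting relation $x\leq_{y\bar{\bf a}}yt_{w\mu}$ already holds for $\leq_{\bar{\bf a}}$, which requires choosing $w\in W(\bar{\bf a}_J)$ so that none of the reflecting hyperplanes contains the facet $\bar{\bf a}_J$ (Proposition \ref{adm-J} and Lemma \ref{stub_lem}). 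Similarly, ``absorbing the $W_J$-ambiguity'' in ${\rm Perm}^{st,J}(\mu)\subseteq\Adm(\mu)W_J$ conceals a Deodhar-type statement about simultaneous parabolic quotients (Proposition \ref{Deodhar}). These two points are fixable along the paper's lines, but as written the proposal does not prove the theorem.
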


The ``acute cone'' of alcoves $\mathcal C(\bar{\bf a}, w)$ introduced in \cite{HN} and analogous to $w\mathcal C$ also plays an important part in this paper, mainly via Corollary \ref{cone_cor}: if $x \in {\rm Adm}(\mu) \cap \mathcal C(\bar{\bf a}, w)$, then $x \leq t_{w\mu}$.

\subsection{Outline of proof}
The proof of Theorem \ref{adm^st-J=adm} is based on the following inclusions: 

\begin{itemize}
\item ${\rm Adm}^{st}(\mu)W_J \subseteq {\rm Perm}^{st,J}(\mu)$. This follows directly from the definitions.
\item ${\rm Perm}^{st, J}(\mu) \subseteq \Adm(\mu) W_J$. See Proposition \ref{key_prop}.

\noindent The special case ${\rm Perm}^{st}(\mu) \subseteq \Adm(\mu)$ was proved in \cite{HN}. Here we follow a similar strategy to prove this inclusion. 
\item $\Adm(\mu) \subseteq \Adm^{st}(\mu)$. See Proposition \ref{adm in adm^st}.

\noindent The proof relies on a detailed study of sequences of reflections in a given direction. 
\item $\Adm^J(\mu)=\Adm(\mu)W_J$. See Proposition \ref{adm-J}. 

\noindent This identity is equivalent to the compatibility property of admissible sets: $$\Adm^J(\mu) \cap \widetilde{W}^J=\Adm(\mu) \cap \widetilde{W}^J.$$ This was first proved in \cite[Theorem~6.1]{He}. Here we give a different proof using acute cones and sequences of reflections in a given direction. 
\end{itemize}

Theorem \ref{adm^st-J=adm} results by putting all these inclusions together.

\subsection{Applications of main theorem}
As a consequence of Theorem \ref{adm^st-J=adm}, we get another proof (in $\S\ref{Zhu_sec}$) of the additivity of admissible sets. 

\begin{theorem} \label{Zhu} \textup{(}\cite[Theorem~5.1]{He}\textup{)}
Let $\mu$ and $\nu$ be any dominant cocharacters, and let $J \subseteq S_{\rm aff}$. Then $${\rm Adm}^J(\mu + \nu) = {\rm Adm}^J(\mu) \, {\rm Adm}^J(\nu).$$
\end{theorem}

The reference \cite{He} handles explicitly the special case $\Adm(\mu+\nu)=\Adm(\mu) \, \Adm(\nu)$. However, the proof there works in the general case.



The additivity property and compatibility property of admissible sets played an important role in the second author's proof \cite{He} of the Kottwitz-Rapoport conjecture on non-emptiness of certain unions of affine Deligne-Lusztig varieties. 


As another consequence of Theorem \ref{adm^st-J=adm}, we prove the Pappas-Rapoport-Smithling conjecture (\cite[Conjecture 4.2]{PR} and \cite[Conjecture 4.39]{PRS}) on vertexwise admissibility. See section \ref{locmod_sec} for an explanation of the notation used here.

\begin{theorem}\label{PRS_conj} Let $G,T,\widetilde{W}(G), S_{\rm aff}(\Sigma), \{\mu\}, \bar{\bf a}$ be as in section \ref{locmod_sec}. 
For any subset $J \subseteq S_{\rm aff}(\Sigma)$, let $\bar{\bf a}_J$ be the corresponding facet in the closure of $\bar{\bf a}$, having set of vertices ${\rm Vert}(\bar{\bf a}_J)$, and let ${\rm Adm}^J(\{ \mu \}) \subset \widetilde{W}(G)$ be the associated $\{ \mu \}$-admissible subset. For $a \in {\rm Vert}(\bar{\bf a})$, let $J_a \subset S_{\rm aff}(\Sigma)$ be the subset such that $W_{J_a}$ is the subgroup of $\widetilde{W}$ fixing $a$. Then $$\Adm^J(\{ \mu \})=\bigcap_{a  \in {\rm Vert}(\bar{\bf a}_J)} {\rm Adm}^{J_a}(\{\mu \}).$$
\end{theorem}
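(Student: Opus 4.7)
The strategy is to derive Theorem~\ref{PRS_conj} as an essentially formal consequence of Theorem~\ref{adm^st-J=adm}. The main theorem furnishes the vertexwise characterization $\Adm^K(\mu) = {\rm Perm}^{st, K}(\mu)$ valid for every $K \subseteq S_{\rm aff}$; once this is in hand, the PRS identity reduces to matching vertex conditions on the two sides.

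First I would unpack the combinatorial meaning of ${\rm Vert}(\bar{\bf a}_J)$ and of $J_a$. The facet $\bar{\bf a}_J$ is the set of points of $\bar{\bf a}$ fixed pointwise by $W_J$, so its vertices are precisely those $a \in {\rm Vert}(\bar{\bf a})$ whose type $t(a) \in S_{\rm aff}(\Sigma)$ lies outside $J$. Dually, for a fixed vertex $a \in {\rm Vert}(\bar{\bf a})$, the subgroup $W_{J_a}$ is generated by the simple affine reflections fixing $a$, so $J_a = S_{\rm aff}(\Sigma) \setminus \{t(a)\}$; in particular the vertexwise condition defining ${\rm Perm}^{st, J_a}(\{\mu\})$ involves only the single vertex $a$ itself.

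Next I would apply Theorem~\ref{adm^st-J=adm} (extended termwise to the Galois-orbit formulation $\Adm^K(\{\mu\}) = \bigcup_{\mu' \in \{\mu\}} \Adm^K(\mu')$ implicit in section~\ref{locmod_sec}) to rewrite
$$
\Adm^J(\{\mu\}) = {\rm Perm}^{st, J}(\{\mu\}), \qquad \Adm^{J_a}(\{\mu\}) = {\rm Perm}^{st, J_a}(\{\mu\}),
$$
so that both sides of the desired identity acquire a vertexwise description. The left-hand side consists of those $x$ for which $x(a) \in B(t_{w\mu'}(a), w)$ holds for every $w \in W$, every $\mu' \in \{\mu\}$, and every $a \in {\rm Vert}(\bar{\bf a})$ with $t(a) \notin J$, i.e., every $a \in {\rm Vert}(\bar{\bf a}_J)$. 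The right-hand side is the intersection, over the same set of vertices $a$, of the sets cutting out exactly the same condition at the single vertex $a$. The two collections of conditions coincide, whence the two sets are equal.

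The main obstacle has already been overcome in Theorem~\ref{adm^st-J=adm}: once the vertexwise description of $\Adm^K$ is in hand there is no remaining conceptual content. The only step requiring care is the compatibility of the Coxeter-theoretic framework (proved for a based reduced root system $\Sigma$) with the setup of section~\ref{locmod_sec}, where $G$ may be nonsplit tamely ramified and $\{\mu\}$ is a geometric conjugacy class defined over an extension of the base field. Because $\Adm^K(\{\mu\})$ is defined as a union of $\Adm^K(\mu')$ and the vertexwise conditions are preserved under such unions, this descent is routine, and Theorem~\ref{PRS_conj} follows.
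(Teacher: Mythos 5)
Your proposal matches the paper's argument: the paper proves exactly this statement by combining Theorem \ref{PRS_prelimconj} (which converts each $\Adm^{J_a}$ into the vertexwise set ${\rm Perm}^{st,J_a}$ via Theorem \ref{adm^st-J=adm} and intersects them, using $\bigcap_{a} J_a = J$) with the reduction to the reduced root system $\Sigma$. The only difference is cosmetic: the paper performs that reduction by passing to the quotient $\widetilde{W}(G)^\flat$ by the torsion subgroup $X_*(T)_{I,\rm tors}$ and taking preimages (Lemma \ref{flat_lem}), rather than by writing $\Adm^K(\{\mu\})$ as a union over the conjugacy class, and correspondingly the vertexwise condition quantifies over $w \in W_0$ applied to the single dominant representative of $\Lambda^\flat_{\{\mu\}}$ rather than over all $\mu' \in \{\mu\}$ as your parenthetical suggests.
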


See \cite[$\S4$]{PR} and \cite[$\S4.5$]{PRS} (and also our Remark \ref{final_rem}) for discussions of this conjecture and its relation to local models of Shimura varieties. We prove Theorem \ref{PRS_conj} in a preliminary form in Theorem \ref{PRS_prelimconj} and in the desired form in Theorem \ref{PRS_conjfinal}.

\section{Notation} \label{notation_sec}

\subsection{} Let $\Sigma = (X^*,X_*, R, R^\vee, \Pi)$ be a based reduced root system and let $\langle ~ , ~ \rangle : X^* \times X_* \rightarrow \ZZ$ be the natural perfect pairing between the free abelian groups $X^*$ and $X_*$. Let $V=X_* \otimes \RR$. For any $\alpha \in R$, we have a reflection $s_\alpha$ on $V$ defined by $s_{\alpha}(x) = x - \langle \alpha , x \rangle \alpha^\vee$. For any $(\alpha, k) \in R \times \ZZ =: \widetilde{R}$, we have an affine root $\tilde{\alpha} = \alpha + k$ and an affine reflection
$s_{\tilde{\alpha}}$ on $V$ defined by $s_{\tilde{\alpha}}(x) = x - (\langle \alpha , x \rangle+k)\alpha^\vee$. 

The finite Weyl group $W$ is the subgroup of ${\rm GL}(V)$ generated by $s_\alpha$ for $\alpha \in R$ and the affine Weyl group $W_{\rm aff}$ is the subgroup of ${\rm Aut}(V)$ generated by $s_{\tilde{\alpha}}$ for affine roots $\tilde{\alpha} = \alpha+ k$. Let $Q^\vee$ be the subgroup of $X_*$ generated by the coroots $\alpha^\vee$. We may identify $W_{\rm aff}$ with $Q^\vee \rtimes W$ in natural way. Inside ${\rm Aut}(V)$ define the {\em extended affine Weyl group} $\widetilde{W} := X_* \rtimes W$, where $X_*$ acts by translations on $V$. For $\lambda \in X_*$, the symbol $t_\lambda \in \widetilde{W}$ stands for the translation $v \mapsto v + \lambda$ on $V$.

Let $H_{\tilde{\alpha}}$ denote the hyperplane (wall) in $V$ fixed by the reflection $s_{\tilde{\alpha}}$.
~The connected components of the set
$V - \bigcup_{\alpha \in R} H_\alpha$ will be called {\em chambers} and the connected components of the set $V - \bigcup_{(\alpha,k) \in \widetilde R} H_{\alpha + k}$ will be called {\em alcoves}. It is known that $W$ acts simply transitively on the set of chambers and $W_{\rm aff}$ acts simply transitively on the set of alcoves. 

For any subset $\mathcal S$ of the apartment $V$, let $cl(\mathcal S)$ denote its closure.

\subsection{} The set of simple roots $\Pi \subset R$ determines the set $R_+$ of positive roots. The group $W$ is generated by $S=\{s_\alpha ~  |~  \alpha \in \Pi\}$ as a finite Coxeter group. The dominant chamber is the set 
$$\mathcal C = \{ x \in V \,\, | \,\, \langle \alpha , x \rangle > 0, \,\,\, \mbox{for every}  
\,\,\, \alpha \in \Pi \}.$$ 
The base alcove is defined to be the set
$$\bar{\bf a}=\{x \in V ~ | ~ -1<\langle \alpha , x \rangle<0, \,\,\, \mbox{for every} \,\,\, \alpha \in R_+\}.$$

Let $S_{\rm aff}$ be the set of $s_{\tilde{\alpha}}$, where $H_{\tilde{\alpha}}$ is a wall of $\bar {\bf a}$. Then $S \subset S_{\rm aff}$ and $W_{\rm aff}$ is generated by $S_{\rm aff}$ as a Coxeter group. 

The extended affine Weyl group $\widetilde W= X_* \rtimes W$ also acts on $V$ and on the set of alcoves. Let $\Omega$ be the isotropy group in $\widetilde W$ of the base alcove $\bar{\bf a}$. It is known that $\widetilde W=W_{\rm aff}\rtimes \Omega$. For $w,w'\in W_{\rm aff}$,
$\tau,\tau'\in\Omega$, we say $w\tau \leq w'\tau'$ if and only if $\tau=\tau'$ and $w\leq w'$ (with respect to the Bruhat order on the Coxeter group $W_{\rm aff}$). We put $\ell(w\tau)=\ell(w)$. 

By abuse, we will call an element $\mu \in X_*$ a {\em cocharacter}. Given $\mu \in X_*$, define $\tau_\mu \in \Omega$ by the requirement $t_\mu \in W_{\rm aff} \tau_\mu$. 

\subsection{} The following definitions are due to Kottwitz and Rapoport \cite{KR}.
\begin{defn} \label{KR_defn} 
Let $\mu \in X_* \cap cl(\mathcal C)$. define:
\begin{enumerate}
\item[(a)] ${\rm Adm}(\mu) = \{ x \in W_{\rm aff}\tau_\mu ~ | ~ x \leq t_\lambda, \,\,\, \mbox{for some $\lambda \in W\mu$} \}$.
\item[(b)] ${\rm Perm}(\mu) = \{ x \in W_{\rm aff}\tau_\mu ~ | ~ x(a) - a \in {\rm Conv}(W\mu), \,\,\, \forall a \in {\rm Vert}(\bar{\bf a}) \}$.
\end{enumerate}
Here ${\rm Vert}(\bar{\bf a})$ is the set of vertices (minimal facets) for $\bar{\bf a}$, and ${\rm Conv}(W\mu)$ is the convex hull of the Weyl-orbit $W\mu \subset V$.
\end{defn}

\subsection{}
Let $J \subseteq S_{\rm aff}$ be any subset of simple affine reflections.  Recall that $W_J$ is the subgroup of $W_{\rm aff}$ generated by the $s_{\tilde{\alpha}} \in J$. We say that $W_J$ is a parahoric subgroup if $W_J$ is finite. 

Let $\widetilde{W}^J$ be the set of minimal length elements of the cosets $x W_J$. Let $\bar{\bf a}_J$ be the subset of elements in the closure of $\bar{\bf a}$ which are pointwise fixed by $W_J$. Let ${\rm Vert}(\bar{\bf a}_J)$ be the collection of vertices for $\bar{\bf a}_J$. 

\subsection{}
For any affine hyperplane $H$, let $H^+$ be the half-space containing $\bar{\bf a}$. For $w \in  W$, we let $H^{w+}$ be the side of $H$ containing all alcoves sufficiently deep inside the Weyl chamber $w\mathcal C$. (This agrees with the definition of $H^{w+}$ given in \cite[$\S5$]{HN}.) Let $H^{w-}$ be the side of $H$ opposite to $H^{w+}$.

For any facet ${\bf f}$ not contained in an affine hyperplane $H_{\tilde{\alpha}}$, we say the affine reflection ${\bf f} \mapsto s_{\tilde{\alpha}}({\bf f})$ is {\em in the $w$-direction} (resp.~$w$-{\em opposite direction}) if the generic point of ${\bf f}$ lies in $H^{w-}$ (resp.~$H^{w+}$) and the generic point of $s_{\tilde{\alpha}}({\bf f})$ lies in $H^{w+}$ (resp.~$H^{w-}$). We often say $s_{\tilde{\alpha}}$ is in the $w$-direction (or $w$-opposite direction) if ${\bf f}$ is understood.  

Following \cite[Def.~5.4]{HN}, let ${\mathcal C}(\bar{\bf a}, w)$ be the set of alcoves ${\bf c}$ such that there is a gallery $\bar{\bf a} = {\bf c}_0, {\bf c}_1, \dots, {\bf c}_l = {\bf c}$ where each reflection ${\bf c}_{i-1} \mapsto {\bf c}_i$ is in the $w$-direction.  For $x \in \widetilde{W}$, we often write $x \in \mathcal C(\bar{\bf a}, w)$ if $x(\bar{\bf a}) \in \mathcal C(\bar{\bf a}, w)$.

\subsection{}  
We will use the following standard lemma about the Bruhat order $\leq$ for a Coxeter system $(W,S)$. One reference is \cite[Prop.\,5.9]{Hum}.

\begin{lemma} \label{std_lem}
For $x, y \in W$ and $s \in S$, if $x \leq y$, then $xs \leq ys$ or $x \leq ys$\, \textup{(}or both\textup{)}.
\end{lemma} 

\section{On the parahoric strongly permissible set} \label{order_sec}


\subsection{Parabolic Bruhat order in Coxeter groups}\label{order}

Let $(W,S)$ be a Coxeter group, and $J \subseteq S$ a set of simple reflections generating the subgroup $W_J \subseteq W$. We again use $\leq $ for the Bruhat order on $(W,S)$. Recall that $W^J = \{ w \in W ~ | ~ w < wt  \,\, \forall t \in J\}$. 

\begin{lemma} \label{standard_lem} We have the following statements:
\begin{enumerate}
\item[(1)] Any element $x \in W$ can be written uniquely as $x = x^J x_J$, where $x^J \in W^J$ and $x_J \in W_J$. Furthermore, $x^J \leq x$.
\item[(2)] If $x \leq y$, then $x^J \leq y^J$.
\item[(3)] For any $x \in W$, and $s \in S$, then 
\begin{equation*} \label{(sx)^J}
(sx)^J = \begin{cases}  x^J, \,\,\,\, \mbox{if $x^{-1}sx \in W_J$} \\ s x^J, \,\,\,\, \mbox{if $x^{-1} s x \notin W_J$}. \end{cases}
\end{equation*}
Furthermore, $(sx)^J = {\rm min}\{ x^J, sx^J \}$ if $sx < x$.
\end{enumerate}
\end{lemma}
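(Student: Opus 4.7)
The plan is to treat the three statements by standard Coxeter-group methods, proving (1) first, then (3) via a case analysis powered by the exchange condition for parabolic quotients, and finally (2) by a direct subword argument that reuses (1).

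For (1), I would start from the standard fact that each coset $xW_J$ contains a unique element of minimal length and that this element coincides with the unique representative of the coset lying in $W^J$; both assertions follow from the exchange condition applied to right multiplication by generators in $J$. Setting $x^J$ equal to this distinguished representative and $x_J := (x^J)^{-1} x$, the decomposition $x = x^J x_J$ is automatic. Length additivity $\ell(x) = \ell(x^J) + \ell(x_J)$ follows because concatenating reduced expressions for $x^J$ and $x_J$ cannot be reducible further without contradicting minimality of $x^J$ in $xW_J$, and then $x^J \leq x$ follows from the subword characterization of Bruhat order.

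For (3), set $y := x^J$. Since $x_J \in W_J$, we have $sxW_J = xW_J$ iff $y^{-1}sy \in W_J$. In that case $(sx)^J$ is the minimum of $xW_J$, which is $y = x^J$. Otherwise I would invoke the standard lemma that for $y \in W^J$ and $s \in S$, either $sy \in W^J$, or else $sy = yt$ for some $t \in J$ (proved by applying strong exchange to a reduced expression for $sy$ built by prepending $s$ to a reduced expression for $y$). Here the second alternative would give $y^{-1}sy = t \in W_J$, contradicting our assumption, so $sy \in W^J$ and hence $(sx)^J = sx^J$. For the final assertion, assume $sx < x$. In the first case, the above dichotomy forces $sy = yt > y$, giving $sx^J > x^J$ and $\min\{x^J, sx^J\} = x^J = (sx)^J$. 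In the second case, $sx^J \in W^J$ yields the length-additive factorization $sx = sx^J \cdot x_J$, so $sx^J > x^J$ would force $\ell(sx) = \ell(x) + 1$, contradicting $sx < x$; hence $sx^J < x^J$ and $\min\{x^J, sx^J\} = sx^J = (sx)^J$.

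For (2), I would work directly with reduced expressions. By (1), concatenating reduced expressions for $y^J$ and $y_J$ gives a reduced expression for $y$. The subword property then produces a reduced expression for $x$ as a subword of this; separating its letters into those coming from the $y^J$-part and those from the $y_J$-part yields a factorization $x = uv$ with $v$ a product of generators in $J$ (so $v \in W_J$) and $u$ expressible as a product of a subword of the reduced expression for $y^J$ (so $u \leq y^J$, even though the subword need not itself be reduced). Since $v \in W_J$, we have $u \in xW_J$; and (1) also implies that $x^J$ is the Bruhat-minimum of the coset $xW_J$ (reducing $x^J w_J$ for $w_J \in W_J$ is handled by concatenation of reduced expressions). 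Hence $x^J \leq u \leq y^J$. The main subtlety to verify is that the separated $u$-part of the subword may not be a reduced expression for $u$; this is handled by the standard observation that a (possibly non-reduced) subword of a reduced expression for $y^J$ whose product equals $u$ already certifies $u \leq y^J$.
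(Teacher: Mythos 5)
Your proposal is correct, but it organizes the logic differently from the paper. The paper dismisses (1) and (2) as standard and devotes its proof entirely to (3), which it deduces from (2): assuming $sx<x$ it gets $(sx)^J\leq x^J$ from monotonicity of $x\mapsto x^J$, and combines this with the general inequality $(sx)^J\leq sx^J$ to pin down $(sx)^J$ in each case. You instead prove (3) without ever invoking (2), using Deodhar's dichotomy (for $y\in W^J$ and $s\in S$, either $sy\in W^J$ or $sy=yt$ with $t\in J$) together with length additivity of the decomposition $x=x^Jx_J$; your case analysis is sound, including the observation that when $x^{-1}sx\in W_J$ the dichotomy forces $sx^J=x^Jt>x^J$, and that when $x^{-1}sx\notin W_J$ and $sx<x$ length additivity rules out $sx^J>x^J$. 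You then supply a genuine proof of (2) by the subword argument, correctly flagging and resolving the one subtlety (a non-reduced subword product of a reduced word is still $\leq$ the ambient element, via the deletion condition), and using that $x^J$ is the Bruhat-minimum, not merely the length-minimum, of its coset. The trade-off: the paper's route is shorter because it leans on (2) as a black box, while yours is more self-contained and makes (3) logically independent of (2), at the cost of importing the dichotomy lemma. Both are standard and correct.
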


\begin{proof}
The first two statements are standard. The last one can be easily proved as follows. If $x^{-1}sx \in W_J$, clearly $(sx)^J = x^J$. Henceforth assume $x^{-1}s x\notin W_J$, so that $(sx)^J \neq x^J$. Suppose $sx < x$. Then $(sx)^J \leq x^J$. Since $(sx)^J \neq x^J$, we have $(sx)^J = (sx^J)^J < x^J$. But then $sx^J < x^J$, and hence $(sx)^J = (s x^J)^J = sx^J$, as desired. Next suppose $x < sx$. Then $x^J \leq (sx)^J$, and we must have $x^J < (sx)^J \leq sx^J$. Comparing lengths we obtain $(sx)^J = sx^J$.  The final statement follows from $(sx)^J \leq sx^J$ (which always holds) combined with $(sx)^J \leq x^J$ (which holds when $sx < x$). 
\end{proof}

\begin{lemma} \label{x'<sx}
Let $x,y \in W,$ $J \subseteq S$, and $s \in S$.  If $x^J \leq y^J$ and $sy < y$, then $(x')^J \leq (sy)^J$, where $x' = {\rm min}\{x, sx \}$.
\end{lemma}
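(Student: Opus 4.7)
The plan is to combine Lemma \ref{standard_lem}(3), which expresses $(sw)^J$ in terms of $w^J$ and $sw^J$ whenever $sw < w$, with the classical lifting property for the Bruhat order in a Coxeter group: if $u \leq v$ and $sv < v$ in $(W,S)$, then $\min\{u, su\} \leq sv$ and $\max\{u, su\} \leq v$. The rough idea is to reduce the target inequality to an application of lifting inside $W^J$ to the known relation $x^J \leq y^J$, after using Lemma \ref{standard_lem}(3) to interpret both $(sy)^J$ and $(x')^J$ in terms of the underlying $W^J$-parts.

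First I would apply Lemma \ref{standard_lem}(3) to $y$: since $sy < y$, the formula gives $(sy)^J = \min\{y^J, sy^J\}$, so either $(sy)^J = y^J$, or else $(sy)^J = sy^J$ with $sy^J < y^J$. The first alternative is easy: the same lemma yields $(x')^J \leq x^J$ (trivially when $x' = x$, and via the min-formula when $x' = sx$), so $(x')^J \leq x^J \leq y^J = (sy)^J$ and we are done.

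The substantive case is the second one, where $sy^J < y^J$. Here I would invoke the lifting property on the inequality $x^J \leq y^J$ with respect to $s$; it immediately produces $\min\{x^J, sx^J\} \leq sy^J = (sy)^J$. It then remains only to bound $(x')^J$ by $\min\{x^J, sx^J\}$. If $sx < x$ then $x' = sx$ and this bound is exactly the final formula of Lemma \ref{standard_lem}(3). If $x < sx$ then $x' = x$ and I need $\min\{x^J, sx^J\} = x^J$, i.e.\ $sx^J \geq x^J$. I expect this subpoint to be the only real obstacle, and it should fall to a short length argument: if instead $sx^J < x^J$, then from the reduced factorization $x = x^J x_J$ with $\ell(x) = \ell(x^J) + \ell(x_J)$ I would get $\ell(sx) \leq \ell(sx^J) + \ell(x_J) < \ell(x)$, contradicting $x < sx$. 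Combining these bounds closes the second case and finishes the proof.
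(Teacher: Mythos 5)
Your proof is correct and follows essentially the same route as the paper's: both reduce the claim to Lemma \ref{standard_lem}(3) together with the lifting property of the Bruhat order applied to $x^J \leq y^J$ and $s$. The only difference is presentational — you split into cases and explicitly verify that $x^J < sx^J$ when $x < sx$ (via the length estimate $\ell(sx) \leq \ell(sx^J) + \ell(x_J)$), a detail the paper leaves implicit in its assertion that $(x')^J = \min\{x^J, sx^J\}$.
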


\begin{proof}
By Lemma \ref{standard_lem}(3), $(x')^J \in \{x^J, (sx)^J \} \subseteq \{x^J, sx^J\}$, and $(x')^J = {\rm min}\{ x^J, sx^J \}$. Similarly, as $sy < y$ we have $(sy)^J = {\rm min}\{y^J, s y^J \}$.  Since $x^J \leq y^J$, we may conclude $(x')^J \leq (sy)^J$.
\end{proof}

\begin{prop} \label{Deodhar}
Let $\{J_i\}$ be a family of subsets $J_i \subseteq S$. Then for $x, y \in W,$
$$
x^{J_i} \leq y^{J_i},\,\, \forall i ~\Longleftrightarrow ~ x^{\cap_i J_i} \leq y^{\cap_i J_i}.
$$
\end{prop}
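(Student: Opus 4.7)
My plan is induction on $\ell(y)$. The direction ($\Leftarrow$) is immediate from Lemma \ref{standard_lem}(2): since $W_J \subseteq W_{J_i}$, we have $xW_{J_i} = x^J W_{J_i}$, whence $x^{J_i} = (x^J)^{J_i}$ and analogously for $y$, so Lemma \ref{standard_lem}(2) applied to $x^J \leq y^J$ gives $x^{J_i} \leq y^{J_i}$.

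For ($\Rightarrow$), the base case $\ell(y) = 0$ is trivial, since $y^{J_i} = e$ forces $x \in \bigcap_i W_{J_i} = W_J$. For the inductive step, I would pick $s \in S$ with $sy < y$, set $x' = \min\{x, sx\}$, and apply Lemma \ref{x'<sx} at each $J_i$ to obtain $(x')^{J_i} \leq (sy)^{J_i}$. Since $\ell(sy) < \ell(y)$, the inductive hypothesis delivers the intermediate estimate $(x')^J \leq (sy)^J$. If $sx > x$, then $x' = x$ and $x^J \leq (sy)^J \leq y^J$ by Lemma \ref{standard_lem}(2). If $sx < x$, Lemma \ref{standard_lem}(3) splits us into subcases: the case $(sx)^J = x^J$ is trivial; the subcase $(sx)^J = sx^J < x^J$ with $(sy)^J = sy^J < y^J$ reduces to showing $sx^J \leq sy^J \Rightarrow x^J \leq y^J$, which holds by the subword characterization of Bruhat order since $s$ is a left descent of both $x^J$ and $y^J$.

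The only genuinely delicate subcase, and where I expect the main obstacle, is $(sx)^J = sx^J$ together with $(sy)^J = y^J$. Here $(y^J)^{-1} s y^J$ lies in $W_J$ and is a reflection; the identity $\ell(y^J w) = \ell(y^J) + \ell(w)$ for $w \in W_J$ (a consequence of $y^J \in W^J$), combined with $\vert \ell(sy^J) - \ell(y^J) \vert = 1$, forces this conjugate to be a simple reflection $t \in J$, so that $sy^J = y^J t > y^J$. Now the same subword trick applied to $sx^J \leq y^J$ (with $s$ a left ascent of both sides) upgrades this to $x^J \leq sy^J = y^J t$. Finally, the right lifting property of the Bruhat order, applied using that $t$ is a right descent of $y^J t$ but not of $x^J$ (since $x^J \in W^J$), converts $x^J \leq y^J t$ into $x^J \leq y^J$, closing the induction.
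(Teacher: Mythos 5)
Your proof is correct and follows essentially the same route as the paper: induction on $\ell(y)$ via a left descent $s$, with Lemma \ref{x'<sx} applied to each $J_i$ feeding the inductive hypothesis for the pair $(\min\{x,sx\},\, sy)$. The paper sidesteps your three-way endgame case analysis (including the ``delicate'' subcase where $s$ conjugates into $W_{\cap_i J_i}$) by first normalizing $y = y^{\cap_i J_i}$, after which $(sy)^{\cap_i J_i} = sy$ and the conclusion follows from a single application of the lifting property.
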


\begin{Remark} The proof is essentially in \cite[Lem.~3.6]{Deo}, which handled the case where $\cap_i J_i = \emptyset$. \end{Remark}

\begin{proof}
We may assume that $y = y^{\cap_i J_i}$. We argue by induction on $\ell(y)$. If $y = 1$, then $x \in \bigcap_i W_{J_i} = W_{\cap_i J_i}$, and so $x^{\cap_i J_i} =1$.

Now we assume that $y > 1$. Then there exists $s \in S$ with $sy < y$.  Let $x' ={\rm min}\{x, sx\}$. Then by Lemma \ref{x'<sx},
$$
(x')^{J_i }\leq (sy)^{J_i}, \,\,\ \forall i.
$$
By the induction hypothesis, $(x')^{\cap_i J_i} \leq (sy)^{\cap_i J_i} = sy$.  Since $x^{\cap_i J_i} \in \{ (x')^{\cap_i J_i}, s(x')^{\cap_i J_i} \}$ and $sy < y$, we conclude $x^{\cap_i J_i} \leq y$. \end{proof}

\subsection{Parahoric strong permissibility}

Now we return to the context of extended affine Weyl groups. In this subsection we will use some results from \cite{HN}, where in contrast to this paper, the Bruhat order is defined using a base alcove $A_0$ in the positive Weyl chamber $\mathcal C$. The reader may easily check that if one replaces $A_0$ with $\bar{\bf a}$ in the required statements from \cite{HN}, the resulting statements still hold (the same proofs work).

Our first task is to prove a variant of \cite[Prop.~6.1]{HN}.

\begin{prop} \label{HN_variant}
Let $x_1, x_2 \in W_{\rm aff}$, $w \in W$, $J \subseteq S_{\rm aff}$. If
\begin{itemize}
\item $x_1 \in W_{\rm aff}^J,$ 
\item for every $a \in {\rm Vert}(\bar{\bf a})$ of type not in $J$, $x_1(a) \in B(x_2(a), w),$ 
\item $x_1(\bar{\bf a}), x_2(\bar{\bf a}) \in \mathcal C(\bar{\bf a}, w),$
\end{itemize}
then $x_1 \leq x_2$.
\end{prop}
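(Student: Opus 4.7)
The strategy is to mimic the proof of \cite[Prop.~6.1]{HN} (the special case $J=\emptyset$) while carefully tracking the parahoric hypothesis $x_1 \in W_{\rm aff}^J$ by means of the parabolic Bruhat-order lemmas developed in Section \ref{order}. I would argue by induction on $\ell(x_2)$.

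For the base case $\ell(x_2)=0$ one has $x_2=1$ and the vertex hypothesis becomes $x_1(a)\in B(a,w)$ for every $a\in\bar{\bf a}_J$. Since $x_1(a)$ is the image of a vertex of the base alcove under an element whose alcove $x_1(\bar{\bf a})$ lies in $\mathcal{C}(\bar{\bf a},w)$, combining with the restriction that $x_1(a)$ must be reachable from $a$ by reflections in the $w$-opposite direction should force $x_1(a)=a$ for all $a\in\bar{\bf a}_J$, so that $x_1\in W_J$. Since $x_1\in W_{\rm aff}^J\cap W_J=\{1\}$, we conclude $x_1=1=x_2$.

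For the inductive step $\ell(x_2)\geq 1$, I would choose a simple affine reflection $s\in S_{\rm aff}$ corresponding to the first step of some $w$-direction gallery from $\bar{\bf a}$ to $x_2(\bar{\bf a})$, so that $sx_2<x_2$. Distinguish two cases. In Case A ($sx_1<x_1$), a short length calculation using $\ell(x_1 t)>\ell(x_1)$ for all $t\in J$ shows $sx_1\in W_{\rm aff}^J$; after transferring the three hypotheses to the pair $(sx_1,sx_2,J)$, the inductive hypothesis yields $sx_1\leq sx_2$ and therefore $x_1\leq x_2$. In Case B ($sx_1>x_1$), the standard Bruhat lifting property reduces the claim $x_1\leq x_2$ to $x_1\leq sx_2$, which follows from the inductive hypothesis applied to $(x_1,sx_2,J)$.

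The principal obstacle in both cases is verifying that $sx_2(\bar{\bf a})\in\mathcal{C}(\bar{\bf a},w)$ and that the vertex conditions continue to hold for $sx_2$. The alcove condition should follow from the observation that stripping a left descent $s$ from a reduced expression for $x_2$ produces a reduced expression whose gallery again consists of $w$-direction reflections, a consequence of the acute-cone analysis of \cite[$\S 5$]{HN}. The vertex condition is transferred by distinguishing whether $s$ fixes $a\in\bar{\bf a}_J$ (in which case $sx_2(a)=x_2(a)$, so nothing changes) or not (in which case the reflection $s$ is prepended or appended, in the $w$-opposite direction, to the defining sequence witnessing $x_1(a)\in B(x_2(a),w)$). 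Assembling these geometric checks completes the induction and yields $x_1\leq x_2$.
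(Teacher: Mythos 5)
Your induction on $\ell(x_2)$ does not close, and the failure occurs exactly at the step you yourself flag as ``the principal obstacle'': the hypotheses of the proposition are \emph{not} stable under passing from $(x_1,x_2)$ to $(sx_1,sx_2)$ or $(x_1,sx_2)$. The assertion that deleting the left descent $s$ from a reduced word of $x_2$ whose gallery is in the $w$-direction yields an element whose alcove is again in $\mathcal C(\bar{\bf a},w)$ is false. Take type $\widetilde{A}_1$, $V=\mathbb R$, $\bar{\bf a}=(-1,0)$, $w=1$, and let $s_1,s_0$ be the reflections in the walls $\{0\}$ and $\{-1\}$ of $\bar{\bf a}$. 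For $x_2=s_1s_0$ one has $x_2(\bar{\bf a})=(1,2)\in\mathcal C(\bar{\bf a},1)$ with $w$-direction gallery $(-1,0),(0,1),(1,2)$ whose first step crosses $\{0\}$, so $s=s_1$; but $s_1x_2=s_0$ and $s_0(\bar{\bf a})=(-2,-1)\notin\mathcal C(\bar{\bf a},1)$. (The acute cone is closed under passing to \emph{prefixes} $s_1\cdots s_k(\bar{\bf a})$ of a reduced word, not under deleting the initial letter.) The vertex condition fails to transfer for the same structural reason: conjugating a sequence of reflections in the $w$-opposite direction by $s$ (with linear part $s_\alpha$) yields, by the easy lemma preceding Lemma \ref{3planes}, a sequence in the $s_\alpha w$-opposite direction, so from $x_1(a)\in B(x_2(a),w)$ you can only conclude $sx_1(a)\in B(sx_2(a),s_\alpha w)$, which is not what the inductive hypothesis requires. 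Since both Case A and Case B appeal to the inductive hypothesis for a pair involving $sx_2$, the argument cannot be completed. (Your base case is also only gestured at, though it is repairable: a gallery in the $w$-direction displaces each vertex by an element of $-w(B_0)$, while $B(a,w)\subseteq a+w(B_0)$, and $w(B_0)\cap(-w(B_0))=\{0\}$ forces $x_1(a)=a$.)

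The actual proof is not an induction and does not mimic a length recursion from \cite{HN}; it replaces your descent-stripping by a single \emph{deep translation}. For each vertex type $i\notin J$, \cite[Lem.~6.4]{HN} supplies an alcove ${\bf a}_i$ such that for any $y$ with $y^{-1}(\bar{\bf a})\in\mathcal C({\bf a}_i,ww_0)$ the vertexwise condition at the vertex of type $i$ becomes the parabolic inequality $(yx_1)^{S_{\rm aff}-\{i\}}\leq(yx_2)^{S_{\rm aff}-\{i\}}$. Choosing one $y$ with $y^{-1}(\bar{\bf a})\in\mathcal C(\bar{\bf a},ww_0)\cap\bigcap_{i\notin J}\mathcal C({\bf a}_i,ww_0)$ and applying Proposition \ref{Deodhar} with $\bigcap_{i\notin J}(S_{\rm aff}-\{i\})=J$ gives $(yx_1)^J\leq(yx_2)^J$, hence $yx_1\leq yx_2u$ for some $u\in W_J$; the length additivity $\ell(yx_1)=\ell(y)+\ell(x_1)$ (from the acute-cone hypothesis on $x_1$) lets one cancel $y$, and $x_1\in W_{\rm aff}^J$ then eliminates $u$. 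If you want to salvage your write-up, you should abandon the induction and work through this translation argument; the parabolic machinery of Section \ref{order} is there precisely to combine the single-vertex inequalities into one inequality modulo $W_J$.
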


It is easiest to envision what this is saying when $J = \emptyset$: suppose the alcoves $x_1(\bar{\bf a}), x_2(\bar{\bf a})$ lie deep inside the Weyl chamber $w\mathcal C$ (and thus automatically in $\mathcal C(\bar{\bf a}, w)$); then a sufficient condition for $x_1 \leq x_2$ in the Bruhat order is that each vertex $x_1(a)$ of $x_1(\bar{\bf a})$ lies in the ``$w$-negative obtuse cone'' $B(x_2(a), w)$ emanating from the vertex of the same type in the alcove $x_2(\bar{\bf a})$.

\begin{proof}
By \cite[Lem.~6.4]{HN}, there exists for $i \notin J$ an alcove ${\bf a}_i$ with the property
that for all $y  \in W_{\rm aff},$ 
$$y^{-1}(\bar{\bf a}) \in \mathcal C({\bf a}_i, ww_0) \Longrightarrow  (yx_1)^{S_{\rm aff}-\{i\}} \leq (yx_2)^{S_{\rm aff}-\{i\}}.
$$
Choose $y \in W_{\rm aff}$ such that
$$
y^{-1}(\bar{\bf a}) \in \mathcal C(\bar{\bf a}, ww_0) \cap \bigcap_{i \notin J} \mathcal C({\bf a}_i, ww_0).
$$
Then $(yx_1)^{S_{\rm aff}-\{i\}} \leq (yx_2)^{S_{\rm aff}-\{i\}}, \,\,\, \forall i \notin J.$
By Proposition \ref{Deodhar}, $(yx_1)^J \leq (yx_2)^J$.  But then standard properties of the Bruhat order (cf.\,Lemma \ref{std_lem}) imply that $yx_1 \leq yx_2 u$ for some $u \in W_J$.
By \cite[Proof of Lem.~6.2]{HN}, $\ell(yx_1) = \ell(y) + \ell(x_1)$. So $x_1 \leq x_2 u$.  Since $x_1 \in W_{\rm aff}^J$, we obtain $x_1 \leq x_2$.
\end{proof}


\begin{prop} \label{key_prop} Let $J \subseteq S_{\rm aff}$. Then $${\rm Perm}^{st, J}(\mu) \subseteq\Adm(\mu) W_J.$$
\end{prop}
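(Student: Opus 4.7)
Given $x\in{\rm Perm}^{st,J}(\mu)$, the plan is to write $x = x_1 u$ with $u\in W_J$ and $x_1$ the unique minimal length representative of the coset $xW_J\subseteq W_{\rm aff}\tau_\mu$, and then to show that $x_1\in{\rm Adm}(\mu)$ by applying Proposition~\ref{HN_variant} to the pair $(x_1, t_{w\mu})$ for a suitably chosen $w\in W$.

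First I would observe that any $u\in W_J$ fixes pointwise every vertex $a\in{\rm Vert}(\bar{\bf a})$ whose type does not lie in $J$. Consequently $x_1(a)=x(a)$ for all such $a$, so the defining condition of ${\rm Perm}^{st,J}(\mu)$ transfers from $x$ to $x_1$: for every $w\in W$ and every $a\in{\rm Vert}(\bar{\bf a})$ of type not in $J$, one has $x_1(a)\in B(t_{w\mu}(a),w)$.

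Next I would choose $w\in W$ so that $x_1(\bar{\bf a})\in\mathcal C(\bar{\bf a},w)$. Such a $w$ exists because the union $\bigcup_{w\in W}\mathcal C(\bar{\bf a},w)$ contains every alcove (the acute-cone covering property of \cite[$\S5$]{HN}). Simultaneously, $t_{w\mu}(\bar{\bf a})$ itself lies in $\mathcal C(\bar{\bf a},w)$, since $w\mu\in cl(w\mathcal C)$ and the displacement from $\bar{\bf a}$ to $t_{w\mu}(\bar{\bf a})$ is realized by a gallery whose every step is a reflection in the $w$-direction. With these ingredients, Proposition~\ref{HN_variant}, applied to $x_1$ and $t_{w\mu}$, delivers $x_1\leq t_{w\mu}$; hence $x_1\in{\rm Adm}(\mu)$ and $x=x_1 u\in{\rm Adm}(\mu)\, W_J$, as desired.

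The main technical point will be the $\tau_\mu$ bookkeeping: Proposition~\ref{HN_variant} is stated for elements of $W_{\rm aff}$, whereas $x_1$ and $t_{w\mu}$ lie in the coset $W_{\rm aff}\tau_\mu$. One disposes of this either by rerunning the proof verbatim after shifting by the length-zero element $\tau_\mu^{-1}\in\Omega$ (so that the relevant parabolic becomes $W_{\tau_\mu J\tau_\mu^{-1}}$, and $\tau_\mu$ permutes ${\rm Vert}(\bar{\bf a})$ and $S_{\rm aff}$ compatibly), or by simply noting that Bruhat order, the vertexwise $B(\cdot,w)$ condition, and the acute cones $\mathcal C(\bar{\bf a},w)$ are all preserved under this shift. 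The substantive input beyond Proposition~\ref{HN_variant} itself is just the choice of $w$ placing $x_1(\bar{\bf a})$ in a $w$-acute cone and the routine verification that $t_{w\mu}(\bar{\bf a})$ lies in the same cone.
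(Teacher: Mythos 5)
Your proposal is correct and follows essentially the same route as the paper: decompose $x = x^J u$, note that $u \in W_J$ fixes the relevant vertices so $x^J$ inherits the strong permissibility conditions, choose $w$ with $x^J(\bar{\bf a}) \in \mathcal C(\bar{\bf a},w)$ (and $t_{w\mu}(\bar{\bf a})$ in the same cone), and invoke Proposition~\ref{HN_variant} to conclude $x^J \leq t_{w\mu}$. Your explicit attention to the $\tau_\mu$ bookkeeping is a point the paper leaves implicit, but the argument is the same.
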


\begin{proof}


Let $x \in {\rm Perm}^{st, J}(\mu)$. Then by definition $x^J \in {\rm Perm}^{st, J}(\mu)$. By \cite[Cor.~5.6]{HN}, there exists $w \in W$ with $x^J(\bar{\bf a}) \in \mathcal C(\bar{\bf a}, w)$. 
By \cite[Cor.~5.7]{HN}, $t_{w\mu}(\bar{\bf a}) \in \mathcal C(\bar{\bf a}, w)$. By Proposition \ref{HN_variant}, $x^J \leq t_{w\mu}$. So $x^J \in {\rm Adm}(\mu)$ and $x \in {\rm Adm}(\mu)W_J$. 
\end{proof}

\section{On the strongly admissible set}

In this section, we prove that admissible alcoves are strongly admissible.

\begin{prop}\label{adm in adm^st}
For any dominant cocharacter $\mu$, we have $$\Adm(\mu) \subseteq \Adm^{st}(\mu).$$
\end{prop}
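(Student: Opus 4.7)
Fix $w \in W$. Given $x \in \Adm(\mu)$ with $x \leq t_\lambda$ for some $\lambda \in W\mu$, the goal is to show $x \in B(t_{w\mu}(\bar{\bf a}), w)$, i.e., to produce a sequence of affine reflections in the $w$-opposite direction taking $t_{w\mu}(\bar{\bf a})$ to $x(\bar{\bf a})$. I would break the argument into two steps, first handling the case $x = t_\lambda$ and then reducing the general case to this.

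\emph{Step 1: The case $x = t_\lambda$.} Choose $v \in W$ of minimal length with $v(w\mu) = \lambda$, and fix a reduced word $v = s_{i_1} \cdots s_{i_k}$ in the simple reflections of $W$. Setting $\mu_0 = w\mu$ and $\mu_j = s_{i_j} \cdots s_{i_1}(w\mu)$, I would construct the desired sequence from $t_{w\mu}(\bar{\bf a})$ to $t_\lambda(\bar{\bf a})$ in $k$ segments, where the $j$-th segment consists of the reflections through the parallel affine hyperplanes $H_{\alpha_{i_j}, m}$ lying between $t_{\mu_{j-1}}(\bar{\bf a})$ and $t_{\mu_j}(\bar{\bf a})$. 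Using the minimality of $v$ (so that $\alpha_{i_j}$ is oriented consistently with passing from $\mu_{j-1}$ to $\mu_j$) and the fact that $\bar{\bf a}$ lies in the antidominant chamber $\bar{\mathcal C}$, I would verify by a direct geometric computation that each of these affine reflections is in the $w$-opposite direction; the sign of $\langle \alpha_{i_j}, w\mu\rangle$ relative to $w\mathcal C$ is what makes this direction check succeed.

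\emph{Step 2: General $x \leq t_\lambda$.} I would induct on the gap $\ell(t_\lambda) - \ell(x)$, the base case being Step 1. For the inductive step with $x < t_\lambda$, the lifting property of the standard Bruhat order supplies a reflection $r$ with $x < rx \leq t_\lambda$ and $\ell(rx) = \ell(x)+1$. By induction there is a $w$-opposite reflection sequence $t_{w\mu}(\bar{\bf a}) = A_0, A_1, \dots, A_m = (rx)(\bar{\bf a})$. If the reflection from $A_m$ across the hyperplane $H$ fixed by $r$ happens to be in the $w$-opposite direction, we simply append it. Otherwise, $H$ is already crossed somewhere in the existing sequence, and I would invoke a rearrangement/cancellation argument on this sequence to produce an alternative $w$-opposite sequence ending at $x(\bar{\bf a})$ instead of $(rx)(\bar{\bf a})$.

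\emph{Main obstacle.} The crux is the rearrangement in Step 2 when the final reflection $r$ points in the $w$-direction. Here one must control precisely which affine hyperplanes appear in a given $w$-opposite reflection sequence, and show that the inversions inherited from $x \leq_{\bar{\bf a}} t_\lambda$ are consistent with a re-routed sequence under the shifted Bruhat order $\leq_{\bf b}$ for ${\bf b} \subset w\bar{\mathcal C}$. I expect the proof to rely on a swap lemma, stating that in a $w$-opposite reflection sequence a matched pair of reflections across the same hyperplane can be removed, together with the acute-cone containment in Corollary \ref{cone_cor} to position intermediate alcoves. This is precisely the ``detailed study of sequences of reflections in a given direction'' advertised in the outline, and it is the main technical ingredient that separates this result from the superficial implication $\Adm^{st}(\mu) \subseteq \Adm(\mu)$.
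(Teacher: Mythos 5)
Your Step 1 is essentially the paper's first step, though the paper does it more directly: it decomposes $\lambda - w\mu$ (more generally $\nu - w\mu$ for $\nu \preceq \mu$) as a sum of coroots $\alpha^\vee \in -wR^\vee_+$ and, for each such coroot, produces two reflections across adjacent parallel hyperplanes $H_{\alpha+N}, H_{\alpha+N\pm 1}$ effecting the translation $t_{\alpha^\vee}$ in the $w$-opposite direction (Lemma \ref{3planes}); no reduced word for $v$ is needed. The genuine gap is in Step 2, and it is exactly the step you flag as the ``main obstacle'': your Case where the lifting reflection $r$ points in the $w$-direction is not resolved, and the mechanism you propose --- cancelling or rerouting within the existing $w$-opposite sequence for $rx(\bar{\bf a})$ --- is the wrong mechanism. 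In group-theoretic terms, writing ${\bf b}=g(\bar{\bf a})$, you know $g^{-1}(rx) \leq g^{-1}t_{w\mu}$ and need $g^{-1}x \leq g^{-1}t_{w\mu}$ where $g^{-1}x = (g^{-1}rg)\,g^{-1}(rx)$ with $g^{-1}rg\cdot g^{-1}(rx) >_{\bf b} g^{-1}(rx)$; this implication is simply false for a general pair of elements in a Coxeter group, so no amount of surgery on the single sequence for $rx$ relative to the fixed chamber $w$ can close the gap. The extra input that must be used is the standard-order hypothesis $x \leq_{\bar{\bf a}} t_\lambda$, and your per-$w$ induction has no way to feed it in. (Also, invoking Corollary \ref{cone_cor} here would be circular: in the paper that corollary is a consequence of this proposition.)

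The paper's resolution is a ``chamber switch,'' and it requires strengthening the inductive hypothesis: one inducts downward in the Bruhat order on the statement that $x$ lies in the \emph{full intersection} $\bigcap_{w'} B(t_{w'\mu}(\bar{\bf a}), w')$, not in a single $B(t_{w\mu}(\bar{\bf a}), w)$. Writing the covering as $y = s_{\tilde{\alpha}}x < x$ with $\tilde{\alpha}=\alpha+k$ positive on $\bar{\bf a}$: if $\alpha^\vee \in -wR^\vee_+$ the new reflection is already $w$-opposite and one concatenates (your easy case); if $\alpha^\vee \in wR^\vee_+$, one takes the sequence witnessing $x \in B(t_{s_\alpha w\mu}(\bar{\bf a}), s_\alpha w)$ --- available only because the inductive hypothesis is the intersection over all chambers --- conjugates it by $s_{\tilde{\alpha}}$ to get a $w$-opposite sequence from $s_{\tilde{\alpha}}t_{s_\alpha w\mu}(\bar{\bf a})$ to $y(\bar{\bf a})$, and prepends the single reflection $s_{\alpha+k'}$, $k'=k-\langle\alpha,w\mu\rangle$, taking $t_{w\mu}(\bar{\bf a})$ to $s_{\tilde{\alpha}}t_{s_\alpha w\mu}(\bar{\bf a})$, which a short computation shows is $w$-opposite. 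The resulting sequence for $y(\bar{\bf a})$ passes through alcoves bearing no relation to the sequence for $x(\bar{\bf a})$ relative to $w$, which is why a rearrangement argument cannot substitute for it. Without this (or an equivalent) idea, your proposal does not constitute a proof.
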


The following easy lemma will be used several times without comment in what follows.  We leave the proof to the reader.

\begin{lemma} Let $w \in W$ and $x = t_\lambda \eta \in X_* \rtimes W$. Let $\beta$ be an affine root and suppose for an alcove ${\bf c}$, the reflection ${\bf c} \mapsto s_\beta({\bf c})$ is in the $w$-opposite direction.  Then the reflection $x{\bf c} \mapsto (x s_\beta x^{-1})(x{\bf c})$ is in the $\eta w$-opposite direction.
\end{lemma}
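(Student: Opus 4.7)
My plan is to unpack both sides of the asserted statement into the half-space language of the Notation section and then transport one to the other via $x$. Write $H_\beta$ for the hyperplane of $s_\beta$ and let $H_\gamma := x(H_\beta)$, so that $x s_\beta x^{-1}$ is the affine reflection across $H_\gamma$. The hypothesis that ${\bf c}\mapsto s_\beta({\bf c})$ is in the $w$-opposite direction unwinds to ${\bf c} \subset H_\beta^{w+}$ and $s_\beta({\bf c}) \subset H_\beta^{w-}$. What we must show, equivalently, is $x({\bf c}) \subset H_\gamma^{(\eta w)+}$ and $x(s_\beta({\bf c})) \subset H_\gamma^{(\eta w)-}$. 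Both conclusions will follow at once from the single identification
\[
x\bigl(H_\beta^{w+}\bigr) \;=\; H_\gamma^{(\eta w)+},
\]
since $x$ is a bijection of $V$ taking $H_\beta$ to $H_\gamma$ and therefore exchanging the complementary half-spaces.

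To establish the displayed equality, I will compare the two sides by testing with a concrete alcove. By the definition of $H_\beta^{w+}$ (the side containing alcoves sufficiently deep in $w\mathcal C$), one can pick an alcove ${\bf a}'\subset H_\beta^{w+}$ that is as deep in $w\mathcal C$ as we like. Applying $x = t_\lambda \eta$ gives $x({\bf a}') \subset x(H_\beta^{w+})$; on the other hand, $x({\bf a}')$ sits in the chamber $t_\lambda(\eta w\,\mathcal C) = \eta w\,\mathcal C + \lambda$, which is a fixed translate of $\eta w\,\mathcal C$. Since translation by the fixed vector $\lambda$ only shifts depth by a bounded amount, choosing ${\bf a}'$ deep enough in $w\mathcal C$ forces $x({\bf a}')$ to lie arbitrarily deep in $\eta w\,\mathcal C$, and in particular to lie on the side $H_\gamma^{(\eta w)+}$ of the fixed hyperplane $H_\gamma$. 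Thus $x(H_\beta^{w+})$ and $H_\gamma^{(\eta w)+}$ share an interior alcove, and since each is one of the two half-spaces cut out by $H_\gamma$, they coincide.

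The only subtle point I anticipate is this last depth comparison: one must be sure that translating an alcove by $\lambda$ does not kick it back onto the wrong side of $H_\gamma$. The resolution is simply that the threshold depth needed to lie on $H_\gamma^{(\eta w)+}$ is a fixed finite quantity depending on $H_\gamma$, while ``sufficiently deep in $w\mathcal C$'' can be taken arbitrarily far inside; thus after translating by $\lambda$ and applying the linear part $\eta$ (which sends $w\mathcal C$ to $\eta w\mathcal C$), the image still exceeds the threshold. Once the half-space identification is in hand, applying $x$ to the inclusions ${\bf c} \subset H_\beta^{w+}$ and $s_\beta({\bf c}) \subset H_\beta^{w-}$ and using $x s_\beta x^{-1} \circ x = x \circ s_\beta$ yields exactly the statement that $x({\bf c})\mapsto (xs_\beta x^{-1})(x({\bf c}))$ is in the $\eta w$-opposite direction, completing the proof.
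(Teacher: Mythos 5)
Your proof is correct and complete: reducing the statement to the single half-space identity $x(H_\beta^{w+})=x(H_\beta)^{(\eta w)+}$ and verifying it by testing with a sufficiently deep alcove is exactly the argument the lemma calls for. The paper explicitly leaves this proof to the reader, so there is no written proof to compare against; your write-up supplies the expected details, including the one genuinely relevant point that the translation by the fixed $\lambda$ shifts depth only by a bounded amount and so cannot move a sufficiently deep alcove across the fixed hyperplane $x(H_\beta)$.
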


Let $R^{\vee}_+$ denote the set of positive coroots.

\begin{lemma} \label{3planes}
Suppose $\alpha^\vee \in -wR^{\vee}_+$ for some $w \in W$. Let ${\bf c}$ be any alcove. Then there is a sequence of reflections in the $w$-opposite direction taking ${\bf c}$ to $t_{\alpha^\vee}{\bf c}$.
\end{lemma}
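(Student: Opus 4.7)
The plan is to exhibit an explicit gallery of length two whose underlying identity is
\[
s_{\alpha+k_1}\, s_{\alpha+k_2} \;=\; t_{(k_2 - k_1)\alpha^\vee},
\]
valid for all integers $k_1,k_2$ and immediate from $\langle \alpha, \alpha^\vee\rangle = 2$ together with the formula for $s_{\tilde{\alpha}}$. Taking $k_2 = k_1 + 1$ reproduces $t_{\alpha^\vee}$, so all that remains is to choose such a pair so that both successive reflections applied to ${\bf c}$ are in the $w$-opposite direction.

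The next step is to use the hypothesis to orient the parallel hyperplanes $\{H_{\alpha+k}\}_{k\in\mathbb Z}$. Since $\alpha^\vee \in -wR^\vee_+$, equivalently $\alpha \in -wR_+$, the pairing $\langle \alpha, x\rangle$ tends to $-\infty$ as $x$ moves deep into $w\mathcal{C}$, and so $H_{\alpha+k}^{w+} = \{x \in V : \langle \alpha, x\rangle < -k\}$ for every integer $k$. Because ${\bf c}$ is an alcove, $\langle \alpha, \cdot\rangle$ takes no integer value on ${\bf c}$, so there is a unique integer $n$ with $\langle \alpha, {\bf c}\rangle \subset (n, n+1)$; I would then set $k_2 = -n-1$ and $k_1 = -n-2$. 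Using $\langle \alpha, s_{\alpha+k}(x)\rangle = -\langle \alpha, x\rangle - 2k$, one checks that $s_{\alpha+k_2}({\bf c})$ has $\alpha$-values in $(n+1, n+2)$, and then $s_{\alpha+k_1}$ sends this to an alcove with $\alpha$-values in $(n+2, n+3)$, which by the displayed identity must equal $t_{\alpha^\vee}{\bf c}$.

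It then remains to verify both reflections are in the $w$-opposite direction. Since ${\bf c}$ has $\langle \alpha, {\bf c}\rangle < n+1 = -k_2$, we have ${\bf c} \subset H_{\alpha+k_2}^{w+}$, and its image has $\alpha$-value greater than $-k_2$, hence lies in $H_{\alpha+k_2}^{w-}$; the second step is identical with $k_1$ in place of $k_2$. Both reflections thus move their source from $H^{w+}$ to $H^{w-}$, so the length-two sequence does the job. There is no real obstacle here beyond keeping track of signs; the single conceptual point is that the hypothesis $\alpha^\vee \in -wR^\vee_+$ orients all the hyperplanes $H_{\alpha+k}$ uniformly with respect to $w$, after which the two forced reflections realizing the translation automatically go in the required direction.
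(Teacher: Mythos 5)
Your argument is correct and is essentially the paper's own proof: both realize $t_{\alpha^\vee}$ as the product of reflections in two consecutive parallel hyperplanes $H_{\alpha+k}$, $H_{\alpha+k+1}$ adjacent to ${\bf c}$, and both use the orientation coming from $\alpha^\vee \in -wR^\vee_+$ to see that each reflection is in the $w$-opposite direction. Your version merely streamlines the paper's two-case bookkeeping by indexing directly via the integer $n$ with $\langle \alpha, {\bf c}\rangle \subset (n, n+1)$.
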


\begin{proof}
Choose any translation $\lambda_0$ in the closure of ${\bf c}$ and let $N$ be the integer such that $\lambda_0 \in H_{\alpha + N + 1}$, so that $\lambda_0 + \alpha^\vee \in H_{\alpha + N - 1}$. If $\langle \alpha, {\bf c} \rangle \subset (-N-2, -N-1)$, then $s_{\alpha + N} s_{\alpha + N+1} ({\bf c}) =t_{\alpha^\vee}{\bf c}$.  If $\langle \alpha, {\bf c} \rangle \subset (-N-1, -N)$, then $s_{\alpha + N -1}s_{\alpha + N}({\bf c}) = t_{\alpha^\vee}{\bf c}$. In either case, we have found two affine reflections, each in the $w$-opposite direction, whose product takes ${\bf c}$ to $t_{\alpha^\vee}{\bf c}$. 
\end{proof}

We can now prove Proposition \ref{adm in adm^st}. 

\begin{proof}
It is enough to prove
\begin{enumerate}
\item[(1)] For each $w \in W$, $t_{w\mu} \in {\rm Adm}^{st}(\mu)$.
\item[(2)] ${\rm Adm}^{st}(\mu)$ is closed under the Bruhat order.
\end{enumerate}

First we prove (1).  We will prove that $\{ t_\nu ~ | ~ \nu \in \Omega(\mu) \} \subset {\rm Adm}^{st}(\mu)$, where $\Omega(\mu)$ is the set of $W$-orbits of dominant cocharacters $\lambda$ with $\lambda \preceq \mu$ in the dominance partial ordering $\preceq$ on cocharacters. Fix $\nu \in \Omega(\mu)$. For each $w \in W$, we need to find a sequence of affine reflections in the $w$-opposite direction taking $t_{w\mu}(\bar{\bf a})$ to $t_{\nu}(\bar{\bf a})$. Since $\nu - w\mu$ is a sum of coroots $\alpha^\vee \in -wR^\vee_+$, it is enough to fix such an $\alpha^\vee$ and a $\lambda_0 \in \Omega(\mu)$, and show that we may get from $t_{\lambda_0}(\bar{\bf a})$ to $t_{\lambda_0 + \alpha^\vee}(\bar{\bf a})$ by a sequence of reflections in the $w$-opposite direction. This follows from Lemma \ref{3planes}.

It remains to prove (2). Suppose $x \in {\rm Adm}^{st}(\mu)$ and let $\tilde{\alpha}$ be an affine reflection taking positive values on $\bar{\bf a}$ and having vector part $\alpha \in R$, such that $s_{\tilde{\alpha}}x < x$.  It is enough to show that $y := s_{\tilde{\alpha}}x \in {\rm Adm}^{st}(\mu)$. Note that $x(\bar{\bf a})$ and $\bar{\bf a}$ are separated by the affine hyperplane $H_{\tilde{\alpha}}$, so that $\langle \tilde{\alpha}, x(\bar{\bf a}) \rangle \subset (-\infty, 0)$. 

Fix $w \in W$. We need to show that there is a sequence of reflections in the $w$-opposite direction, taking $t_{w\mu}(\bar{\bf a})$ to $y(\bar{\bf a})$. Note that $w^{-1}\alpha^\vee$ is either in $R^\vee_+$ or in $-R^\vee_+$. 

\noindent{\bf Case 1:} $\alpha^\vee \in -wR^\vee_+$. In this case the reflection taking $x(\bar{\bf a})$ to $y(\bar{\bf a})$ is already in the $w$-opposite direction, since $\langle \tilde{\alpha}, x(\bar{\bf a}) \rangle \subset (-\infty, 0)$. As $x \in {\rm Adm}^{st}(\mu)$, there is a sequence of reflections $s_\bullet$ in the $w$-opposite direction taking $t_{w\mu}(\bar{\bf a})$ to $x(\bar{\bf a})$, and the concatenation $s_{\tilde{\alpha}} \circ s_\bullet$ is the desired sequence.

\noindent{\bf Case 2:} $\alpha^\vee \in wR^\vee_+$.  Hence $\alpha^\vee \in -s_\alpha wR_+^\vee$. Write $\tilde{\alpha} = \alpha + k$, for $k \in \mathbb Z$.  Since $\alpha + k$ takes positive values on $\bar{\bf a}$, we either have (i) $k \geq 1$, or (ii) $k = 0$ and $\alpha <0$. 

As $x \in {\rm Adm}^{st}(\mu)$, there is a sequence of reflections $s_\bullet$ in the $s_\alpha w$-opposite direction, taking $t_{s_\alpha w\mu}(\bar{\bf a})$ to $x(\bar{\bf a})$. Conjugating by $s_{\tilde{\alpha}}$, we get a sequence $s'_\bullet := s_{\tilde{\alpha}} s_\bullet s_{\tilde{\alpha}}$ in the $w$-opposite direction, taking $s_{\tilde{\alpha}}t_{s_{\alpha}w\mu}(\bar{\bf a})$ to $y(\bar{\bf a})$. Note $s_{\tilde{\alpha}}t_{s_{\alpha}w\mu}(\bar{\bf a})= s_{\alpha +k'} t_{w\mu}(\bar{\bf a})$ where $k' = k - \langle \alpha, w\mu \rangle$.

We claim that the reflection $s_{\alpha +k'}$ taking $t_{w\mu}(\bar{\bf a})$ to $s_{\tilde{\alpha}}t_{s_{\alpha}w\mu}(\bar{\bf a})$ is in the direction of $-\alpha^\vee$, i.e., in the $w$-opposite direction. To see this, let $a \in \bar{\bf a}$ be arbitrary, and note that $s_{\alpha+k'}t_{w\mu}(a) = -(k + \langle \alpha, a \rangle)\alpha^\vee + t_{w\mu}(a)$, and further note that for either case (i) or (ii), $k + \langle \alpha, a \rangle > 0$. The concatenation $s'_\bullet \circ s_{\alpha + k'}$ gives the desired sequence of reflections in the $w$-opposite direction taking $t_{w\mu}(\bar{\bf a})$ to $y(\bar{\bf a})$.
\end{proof}

\begin{cor} \label{cone_cor}
Let $x \in {\rm Adm}(\mu) \cap \mathcal C(\bar{\bf a}, w)$.  Then $x \leq t_{w\mu}$.
\end{cor}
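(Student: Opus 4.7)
The plan is to deduce Corollary \ref{cone_cor} almost immediately by combining Proposition \ref{adm in adm^st} with Proposition \ref{HN_variant} at the empty parahoric level $J = \emptyset$. Fix $x \in \Adm(\mu) \cap \mathcal{C}(\bar{\bf a}, w)$ and the corresponding $w \in W$; I would verify each of the three hypotheses of Proposition \ref{HN_variant} taking $x_1 = x$ and $x_2 = t_{w\mu}$.

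The first step is to upgrade the admissibility hypothesis to a vertexwise obtuse-cone statement. By Proposition \ref{adm in adm^st}, $x \in \Adm^{st}(\mu)$, and by the obvious containment $\Adm^{st}(\mu) \subseteq \Perm^{st}(\mu) = \Perm^{st,\emptyset}(\mu)$ noted before Theorem \ref{adm^st-J=adm}, this yields $x(a) \in B(t_{w\mu}(a), w)$ for every vertex $a \in \Vert(\bar{\bf a})$ (and in fact for every $w \in W$, though only our fixed $w$ is needed). This is exactly the obtuse-cone hypothesis of Proposition \ref{HN_variant} with $J = \emptyset$.

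The second step is the acute-cone hypothesis. One half, $x(\bar{\bf a}) \in \mathcal{C}(\bar{\bf a}, w)$, is given; the other half, $t_{w\mu}(\bar{\bf a}) \in \mathcal{C}(\bar{\bf a}, w)$, is \cite[Cor.~5.7]{HN}. The minimal-length requirement $x \in W_{\rm aff}^{\emptyset}$ is vacuous, and the passage from $W_{\rm aff}$ to $W_{\rm aff}\tau_\mu$ causes no trouble since $\tau_\mu$ is a common $\Omega$-component of $x$ and $t_{w\mu}$ and does not affect either side of the Bruhat comparison (by the definition $w\tau \leq w'\tau' \Leftrightarrow \tau = \tau' \text{ and } w \leq w'$ on $\widetilde{W} = W_{\rm aff}\rtimes\Omega$) or the action on alcoves and vertices. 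Applying Proposition \ref{HN_variant} then delivers $x \leq t_{w\mu}$.

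I do not anticipate a serious obstacle here: the deep work of building sequences of reflections in a prescribed direction (Proposition \ref{adm in adm^st}) and running the parabolic Bruhat-order argument (Proposition \ref{HN_variant}) has already been carried out. The corollary is essentially an assembly step, and the only small verification is the harmless reduction from $\widetilde{W}$ to $W_{\rm aff}$ described above.
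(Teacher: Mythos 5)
Your proposal is correct and follows essentially the same route as the paper: the paper's proof also passes through Proposition \ref{adm in adm^st} to get $x \in \Adm^{st}(\mu) \subseteq {\rm Perm}^{st}(\mu)$ and then runs the argument of Proposition \ref{key_prop} (i.e.\ Proposition \ref{HN_variant} with $J=\emptyset$, using \cite[Cor.~5.7]{HN} for $t_{w\mu}(\bar{\bf a}) \in \mathcal C(\bar{\bf a},w)$ and the given membership $x \in \mathcal C(\bar{\bf a},w)$ in place of \cite[Cor.~5.6]{HN}). Your extra remark about reducing from $W_{\rm aff}\tau_\mu$ to $W_{\rm aff}$ is a harmless point the paper leaves implicit.
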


\begin{proof} 
By Proposition \ref{adm in adm^st}, $x \in \Adm^{st}(\mu) \subseteq {\rm Perm}^{st}(\mu)$. By the proof of Proposition \ref{key_prop} (alternatively, by the proof of \cite[Prop.~2]{HN}), we obtain $x \leq t_{w \mu}$. 
\end{proof}

This result exhibits a natural translation element which an admissible element necessarily precedes in the Bruhat order. It is a substantial generalization of the fact that for a dominant cocharacter $\mu$ and an element  $t_\lambda w \in Wt_\mu W$, we have $t_\lambda w \in {\rm Adm}(\mu)$ if and only if $t_\lambda w \leq t_\lambda$.  This statement was proved in \cite{H01, HP} for $\mu$ minuscule and the general case can be proved in a similar manner. It also follows immediately from \cite[Proposition 2.1]{HL}.

\begin{Remark} \label{simpleThm1_rem}
Using Proposition \ref{adm in adm^st} with the inclusions ${\rm Adm}^{st}(\mu) \subseteq {\rm Perm}^{st}(\mu) \subseteq {\rm Adm}(\mu)$ (the latter proved in \cite[Prop.~2]{HN}), we get the equality ${\rm Adm}^{st}(\mu) = {\rm Perm}^{st}(\mu) = {\rm Adm}(\mu)$ (and thus also Corollary \ref{cone_cor}) without using the material of $\S\ref{order_sec}$ and $\S\ref{adm-J_sec}$. Those sections are needed to prove the full version of Theorem \ref{adm^st-J=adm}.
\end{Remark}

\begin{Remark} \label{Conv_rem}
Let $B_0$ denote the negative obtuse cone in $V$ generated by the coroots $-\alpha^\vee$ with $\alpha \in R_+$. The following properties of the convex hull ${\rm Conv}(W\mu)$ are well-known:
\begin{enumerate}
\item[(i)] ${\rm Conv}(W\mu) = \bigcap_{w \in W} w\mu + w(B_0)$ 
\item[(ii)] ${\rm Conv}(W\mu) \cap w\mathcal C = (w\mu + w(B_0)) \cap w\mathcal C, \,\,\, \forall w \in W$.
\end{enumerate}
We may regard ${\rm Adm}(\mu)$ as the alcove-theoretic analogue of ${\rm Conv}(W \mu)$.
Theorem \ref{adm^st-J=adm} (for $J = \emptyset$) and Corollary \ref{cone_cor} imply the alcove-theoretic analogues of (i) and (ii):
\begin{enumerate}
\item[(I)] ${\rm Adm}(\mu) = \bigcap_{w \in W} B(t_{w\mu}(\bar{\bf a}), w)$
\item[(II)] ${\rm Adm}(\mu) \cap \mathcal C(\bar{\bf a}, w) = \{ x \leq t_{w\mu}\} \cap \mathcal C(\bar{\bf a}, w), \,\,\, \forall w \in W$.
\end{enumerate}
\end{Remark}


\section{On the parahoric admissible set} \label{adm-J_sec}

In this section, we prove 

\begin{prop}\label{adm-J}
Let $\mu$ be a dominant cocharacter and $J \subseteq S_{\rm aff}$. Then $$\Adm^J(\mu) \cap \widetilde{W}^J=\Adm(\mu) \cap \widetilde{W}^J.$$
\end{prop}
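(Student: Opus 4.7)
The reverse inclusion $\Adm(\mu) \cap \widetilde{W}^J \subseteq \Adm^J(\mu) \cap \widetilde{W}^J$ is immediate from $\Adm(\mu) \subseteq \Adm^J(\mu)$. For the forward inclusion, the plan is to first establish $\Adm^J(\mu) \subseteq {\rm Perm}^{st, J}(\mu)$ and then invoke Proposition \ref{key_prop} together with the minimality condition in $\widetilde{W}^J$.

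First I would upgrade Proposition \ref{adm in adm^st} to a vertexwise statement: for every $y \in \Adm^{st}(\mu)$, every vertex $a$ of $\bar{\bf a}$, and every $w \in W$, one has $y(a) \in B(t_{w\mu}(a), w)$. Given a sequence of reflections in the $w$-opposite direction taking $t_{w\mu}(\bar{\bf a})$ to $y(\bar{\bf a})$, apply them to the vertex $t_{w\mu}(a)$. At each step the current vertex is a vertex of the current alcove, so either it lies on the reflecting hyperplane (and is fixed, so the step is dropped) or it lies strictly on the $w\mathcal C$-side, and the reflection sends it to the $w\bar{\mathcal C}$-side. The trimmed sequence is the desired certificate.

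Next, for $x \in \Adm^J(\mu)$, write $x = u_1 y u_2$ with $u_1, u_2 \in W_J$ and $y \in \Adm(\mu) \subseteq \Adm^{st}(\mu)$. A vertex $a$ of type not in $J$ is fixed by $W_J$, so $x(a) = u_1 y(a)$. Writing $u_1 = t_\nu \eta$ with $\eta \in W$, the identity $u_1 t_{w\mu}(a) = t_{\eta w \mu}(a)$ together with conjugating the reflection sequence by $u_1$ produces a sequence from $t_{\eta w \mu}(a)$ to $x(a)$ in the $\eta w$-opposite direction: a finite translation does not affect the asymptotic ``deep $w\mathcal C$'' condition defining $H^{w+}$, only relabelling $w$ as $\eta w$. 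Since $w \mapsto \eta w$ is a bijection of $W$, this gives $x \in {\rm Perm}^{st, J}(\mu)$, and Proposition \ref{key_prop} then yields $\Adm^J(\mu) \subseteq \Adm(\mu)\,W_J$.

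Finally, for $x \in \Adm^J(\mu) \cap \widetilde{W}^J$ write $x = zv$ with $z \in \Adm(\mu)$ and $v \in W_J$. Since $x$ is the minimum-length representative of $xW_J = zW_J$, Lemma \ref{standard_lem}(1) forces $x = z^J \leq z$; closure of $\Adm(\mu)$ under the Bruhat order then gives $x \in \Adm(\mu)$. I expect the most delicate step to be the conjugation bookkeeping in the third paragraph: once the $w$-opposite condition is shown to transform into the $\eta w$-opposite condition under $u_1$, the remaining pieces assemble cleanly from the results of Sections \ref{order_sec} and the strong-admissibility argument.
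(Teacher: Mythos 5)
Your proof is correct, and it takes a genuinely different route from the paper's. The paper argues geometrically: it takes the minimal element $x_0$ of the double coset $W_J x W_J$, uses the decomposition of $cl(\mathcal C(\bar{\bf a}_J))$ into the chambers $cl(a_J + w\mathcal C)$ with $w \in W(\bar{\bf a}_J)$ together with Lemma \ref{stub_lem} to place $x_0(\bar{\bf a})$ in an acute cone $\mathcal C(\bar{\bf a},w)$, deduces $x_0 \leq t_{w\mu}$ from Corollary \ref{cone_cor}, and then transfers this relation to $x = yx_0$ by conjugating a reflection sequence and checking that none of the relevant hyperplanes contains $\bar{\bf a}_J$. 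You instead route the statement through ${\rm Perm}^{st,J}(\mu)$: you prove $W_J\,{\rm Adm}^{st}(\mu)\,W_J \subseteq {\rm Perm}^{st,J}(\mu)$ --- via the vertexwise restriction (with trimming) of alcove reflection sequences, plus the observation that conjugation by $u_1 = t_\nu\eta \in W_J$ converts $w$-opposite sequences into $\eta w$-opposite ones while fixing the vertices of $\bar{\bf a}_J$, which is exactly the paper's unnamed easy lemma --- and then apply Proposition \ref{key_prop} and minimality in $\widetilde{W}^J$. This is not circular, since Propositions \ref{adm in adm^st} and \ref{key_prop} are established independently of Proposition \ref{adm-J}, and the individual steps check out: $u_1 t_{w\mu}u_1^{-1} = t_{\eta w\mu}$ and $u_1(a) = a$ give $u_1 t_{w\mu}(a) = t_{\eta w\mu}(a)$, and the bijection $w \mapsto \eta w$ recovers the condition for all of $W$; the final step $x = z^J \leq z$ with downward closure of $\Adm(\mu)$ is standard. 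What your route buys is that the chain $\Adm^J(\mu) \subseteq {\rm Perm}^{st,J}(\mu) \subseteq \Adm(\mu)W_J \subseteq \Adm^J(\mu)$ closes Theorem \ref{adm^st-J=adm} without any of the $\mathcal C(\bar{\bf a}_J)$, $W(\bar{\bf a}_J)$ apparatus of Section \ref{adm-J_sec}; what it costs is reliance on the full parahoric strength of Proposition \ref{key_prop} (hence Proposition \ref{Deodhar} and the inputs from \cite{HN}), whereas the paper's proof of this particular proposition needs only the $J = \emptyset$ consequence, Corollary \ref{cone_cor}. One point worth making explicit in a write-up: the inclusion the paper calls immediate from the definitions is only ${\rm Adm}^{st}(\mu)W_J \subseteq {\rm Perm}^{st,J}(\mu)$, so the left $W_J$-stability of ${\rm Perm}^{st,J}(\mu)$ that you need is a genuine (if easy) extra step --- and you supply it correctly.
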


\subsection{}
Define  $\mathcal C(\bar{\bf a}_J) = \bigcap_{\tilde{\alpha} \in J} H^+_{\tilde{\alpha}}$. In a sense, this is the ``Weyl chamber'' containing $\bar{\bf a}$ with ``apex'' the facet $\bar{\bf a}_J$. Choose $a_J \in \bar{\bf a}_J$, and define 
$$
W(\bar{\bf a}_J) = \{ w \in W ~ | ~  w\mathcal C \subset H^+_{\tilde{\alpha}} - a_J,\,\, \forall \tilde{\alpha} \in J\}.
$$
This set is independent of the choice of $a_J$. It is straightforward to verify the equality
\begin{equation} \label{J-obtcone_eq}
cl(\mathcal C(\bar{\bf a}_J)) = \bigcup_{w \in W(\bar{\bf a}_J)} cl(a_J + w\mathcal C).
\end{equation}
Thus, our ``Weyl chamber'' is in this sense a union of certain ordinary Weyl chambers, translated by $a_J$.

\begin{lemma} \label{stub_lem}
Let $w \in W$. 
\begin{enumerate}
\item[(1)] For an alcove ${\bf b}$ in the apartment,  ${\bf b} \cap cl(a_J + w\mathcal C) \neq \emptyset \Longrightarrow {\bf b} \in \mathcal C(\bar{\bf a}, w)$. 
\item[(2)] If $w \in W(\bar{\bf a}_J)$, then the alcove $w\mu + \bar{\bf a}$ is contained in $\mathcal C(\bar{\bf a}_J)$.
\end{enumerate}
\end{lemma}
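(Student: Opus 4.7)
The plan is to handle both parts by direct sign computations after fixing conventions. Throughout, for each affine hyperplane $H$ I will choose the affine root $\tilde{\alpha} = \alpha + k$ representing $H$ so that $\tilde{\alpha}|_{\bar{\bf a}} > 0$; then $H^+ = \{\tilde{\alpha}>0\}$, and from the asymptotic behavior of $\tilde{\alpha}$ on points of the form $tw v$ with $v \in \mathcal C$ and $t \to \infty$, one has $H^{w+} = \{\tilde{\alpha}>0\}$ iff $\alpha \in wR_+$. I will also use the standard characterization (immediate from the definition of $w$-direction): ${\bf c} \in \mathcal C(\bar{\bf a},w)$ iff for every affine hyperplane $H$ separating $\bar{\bf a}$ from ${\bf c}$, one has ${\bf c} \subset H^{w+}$.

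For part (1), because ${\bf b}$ is open and $cl(a_J + w\mathcal C)$ is the closure of the open convex cone $a_J + w\mathcal C$, the hypothesis produces a point $p \in {\bf b} \cap (a_J + w\mathcal C)$, which I write as $p = a_J + q$ with $q \in w\mathcal C$. For any $H_{\tilde{\alpha}}$ separating $\bar{\bf a}$ and ${\bf b}$, the inequalities $\tilde{\alpha}(p) < 0$ (since $p \in {\bf b} \subset \{\tilde{\alpha}<0\}$) and $\tilde{\alpha}(a_J) \geq 0$ (since $a_J \in cl(\bar{\bf a})$), combined with the affine identity $\tilde{\alpha}(p) = \tilde{\alpha}(a_J) + \alpha(q)$, force $\alpha(q) < 0$. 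Since $q$ lies in the open cone $w\mathcal C$, this forces $\alpha \in wR_-$, and hence $H^{w+} = \{\tilde{\alpha} < 0\}$ is precisely the side of $H_{\tilde{\alpha}}$ containing ${\bf b}$. Varying $H$ gives ${\bf b} \in \mathcal C(\bar{\bf a},w)$.

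For part (2), fix $\tilde{\alpha} \in J$ with $\tilde{\alpha}|_{\bar{\bf a}} > 0$. Since $a_J \in \bar{\bf a}_J \subset H_{\tilde{\alpha}}$, we have $\tilde{\alpha}(a_J) = 0$, and the affine identity $\tilde{\alpha}(v + a_J) = \tilde{\alpha}(a_J) + \alpha(v) = \alpha(v)$ reduces the defining condition on $w \in W(\bar{\bf a}_J)$ to $\alpha(w\mathcal C) > 0$, equivalently $w^{-1}\alpha \in R_+$. Then for any $x \in \bar{\bf a}$,
$$
\tilde{\alpha}(w\mu + x) = \langle w^{-1}\alpha,\,\mu\rangle + \tilde{\alpha}(x) > 0,
$$
using dominance of $\mu$ (so $\langle w^{-1}\alpha,\mu\rangle \geq 0$) and $\tilde{\alpha}|_{\bar{\bf a}} > 0$. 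Since $\tilde{\alpha} \in J$ was arbitrary, $w\mu + \bar{\bf a} \subset \bigcap_{\tilde{\alpha}\in J}H^+_{\tilde{\alpha}} = \mathcal C(\bar{\bf a}_J)$. No substantive obstacle arises; the only real task is careful sign bookkeeping, since $\bar{\bf a}$ lies in the antidominant chamber and each hyperplane admits two affine-root representatives of opposite sign, and one must consistently translate ``deep in $w\mathcal C$'' into the condition $\alpha \in wR_\pm$.
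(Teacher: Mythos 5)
Your proof is correct and follows essentially the same route as the paper's: in (1), locating a point of ${\bf b}$ in the open cone and comparing signs of $\tilde{\alpha}$ across each separating wall to conclude via a minimal gallery, and in (2), evaluating each $\tilde{\alpha} \in J$ on $w\mu + \bar{\bf a}$ using $\tilde{\alpha}(a_J)=0$ and dominance of $\mu$. The only differences are cosmetic (you work with $a_J$ and the open cone directly rather than a nearby point $a \in \bar{\bf a}$, and you argue (2) directly instead of by contradiction).
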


\begin{proof}
For (1), assume ${\bf b}$ meets $cl(a_J + w\mathcal C)$.  We may write
$$
b = a + w\tilde{c}
$$
for some $b \in {\bf b}$, some $\tilde{c} \in cl(\mathcal C)$, and some $a \in \bar{\bf a}$ sufficiently close to $a_J$.  So for every $\alpha \in w(R_+)$, we have
$$
\langle \alpha, a \rangle \leq \langle \alpha, b \rangle.
$$
This means that for any affine hyperplane $H$ separating $\bar{\bf a}$ from ${\bf b}$, we must have $\bar{\bf a} \subset H^{w-}$ and ${\bf b} \subset H^{w+}$. But then it is clear that any minimal gallery joining $\bar{\bf a}$ to ${\bf b}$ is in the $w$-direction, i.e., ${\bf b} \in \mathcal C(\bar{\bf a}, w)$.

For (2), let $\tilde{\alpha} \in J$ have vector part $\alpha$. We must show that $w\mu + \bar{\bf a} \subset H_{\tilde{\alpha}}^+$. If not, then $\tilde{\alpha}(w\mu + \bar{\bf a}) \subset (-\infty, 0)$. Because $\tilde{\alpha}$ is positive on $\bar{\bf a}$, this implies $\langle \alpha, w\mu \rangle \leq -1$. On the other hand, by the very definition of $W(\bar{\bf a}_J)$ we have $a_J + w\mathcal C \subset H_{\tilde{\alpha}}^+$. Thus $\tilde{\alpha}$ takes non-negative values on $w\mu + a_J \in cl(a_J + w\mathcal C)$. Since $\tilde{\alpha}$ vanishes on $a_J$, this means $\langle \alpha, w\mu \rangle \geq 0$, a contradiction.
\end{proof}

\subsection{}
Now we can prove Proposition \ref{adm-J}.

\begin{proof} 

Suppose $x \in \Adm^J(\mu) \cap \widetilde{W}^J$.  Let $x_0 \in W_J x W_J$ be the unique element of minimal length, an element which is automatically in ${\rm Adm}(\mu)$. It is easy to see that $x_0(\bar{\bf a}) \subset \mathcal C(\bar{\bf a}_J)$. By (\ref{J-obtcone_eq}), there is a $w \in W(\bar{\bf a}_J)$ such that $x_0(\bar{\bf a})$ meets $cl(a_J + w\mathcal C)$. By Lemma \ref{stub_lem}(1), $x_0(\bar{\bf a}) \in \mathcal C(\bar{\bf a},w)$. Since $x_0$ is in ${\rm Adm}(\mu)$, by Corollary \ref{cone_cor}, we have $x_0 \leq t_{w\mu}$.

Therefore there exists a sequence of affine reflections $s_{\tilde{\alpha}_1}, \dots, s_{\tilde{\alpha}_r}$ with the properties
\begin{enumerate}
\item[(i)] $s_{\tilde{\alpha}_r}\cdots s_{\tilde{\alpha}_1}x_0 = t_{w\mu}$;
\item[(ii)] for each $i\geq 1$, $\bar{\bf a}$ and $s_{\tilde{\alpha}_{i-1}} \cdots s_{\tilde{\alpha}_1} x_0(\bar{\bf a})$ are on the same side of $H_{\tilde{\alpha}_i}$.
\end{enumerate} 

We may write $x = yx_0$ for a (unique) $y \in W_J$. Write $\tilde{\beta}_i = y\tilde{\alpha}_i$ for all $i$. Then applying $y$ to (i) and (ii) we get a sequence of affine reflections $s_{\tilde{\beta}_i}$ such that
\begin{enumerate}
\item[(i')] $s_{\tilde{\beta}_r} \cdots s_{\tilde{\beta}_1} x = yt_{w\mu}$;
\item[(ii')] for each $i \geq 1$, $y\bar{\bf a}$ and $s_{\tilde{\beta}_{i-1}} \cdots s_{\tilde{\beta}_1} x(\bar{\bf a})$ are on the same side of $H_{\tilde{\beta}_i}$.
\end{enumerate}
Thus $x \leq_{y\bar{\bf a}} yt_{w\mu}$, in the Bruhat order $\leq_{y \bar{\bf a}}$ determined by the alcove $y\bar{\bf a}$. But $\bar{\bf a}$ and $y\bar{\bf a}$ are on the same side of $H_{\tilde{\beta}_i}$ for all $i$. If not, then $H_{\tilde{\beta}_i} \supset \bar{\bf a}_J$ and also $H_{\tilde{\alpha}_i} \supset \bar{\bf a}_J$. But $t_{w\mu}(\bar{\bf a}) \subset \mathcal C(\bar{\bf a}_J)$ (Lemma \ref{stub_lem}(2)) and therefore $\bar{\bf a}$ and $t_{w\mu}(\bar{\bf a})$ are on the same side of $H_{\tilde{\alpha}_i}$, which is incompatible with property (ii). 

Thus in fact $x \leq yt_{w\mu}$.  Write $yt_{w\mu} = t_{w'\mu}y$ for some $w' \in W$. Then since $x \in \widetilde{W}^J$ and $x \leq t_{w'\mu}y$, a standard property of the Bruhat order yields $x \leq t_{w'\mu}$ (use Lemma \ref{std_lem} repeatedly).
\end{proof}




\section{Proof of Theorem \ref{Zhu}} \label{Zhu_sec}

\begin{proof}
The proof for the inclusion ${\rm Adm}^J(\mu + \nu) \subseteq {\rm Adm}^J(\mu) \, {\rm Adm}^J(\nu)$ is similar to the proof in \cite[Theorem 5.1]{He}. Suppose $x \leq y_1 t_{w(\mu+\nu)} y_2 =y_1 t_{w\mu} \cdot t_{w\nu} y_2$ for some $y_1, y_2 \in W_J$. By looking at subwords of a reduced expression for $y_1 t_{w\mu}$ followed by a reduced expression of $t_{w\nu} y_2$ we may realize $x$ as a product $x = x' x''$ with $x' \leq y_1 t_{w\mu}$ and $x'' \leq t_{w\nu} y_2$. 

The converse inclusion ${\rm Adm}^J(\mu) {\rm Adm}^J(\nu) \subseteq {\rm Adm}^J(\mu+ \nu)$ can be proved as in \cite{He} following Zhu's argument using global Schubert varieties.  Here we give a simple proof based on Theorem \ref{adm^st-J=adm}.  

We first show that $\Adm(\mu) \Adm(\nu) \subseteq \Adm(\mu+\nu)$. Let $x' \in {\rm Adm}(\mu)={\rm Adm}^{st}(\mu)$ and $x'' \in {\rm Adm}(\nu)={\rm Adm}^{st}(\nu)$. Fix $w \in W$. There exists a sequence of affine reflections $s'_\bullet$ in the $w$-opposite direction taking $t_{w\mu}(\bar{\bf a})$ to $x'(\bar{\bf a})$. Write $x' = t_\lambda \eta \in X_* \rtimes W$, and set $w' = \eta^{-1}w$. Then $x'^{-1}t_{w\nu} x' = t_{w'\nu}$. Choose a sequence $s''_\bullet$ in the $w'$-opposite direction taking $t_{w'\nu}(\bar{\bf a})$ to $x''(\bar{\bf a})$.  Then the concatenation $(x' s''_\bullet x'^{-1}) \circ (t_{w\nu} s'_\bullet t_{-w\nu})$ is a sequence of reflections in the $w$-opposite direction, taking $t_{w(\mu + \nu)}(\bar{\bf a})$ to $x'x''(\bar{\bf a})$. This shows that $x'x'' \in {\rm Adm}^{st}(\mu + \nu) = {\rm Adm}(\mu + \nu)$.

By Theorem \ref{adm^st-J=adm}, $\Adm^J(\mu)=W_J \Adm(\mu)$ and $\Adm^J(\nu)=\Adm(\nu) W_J$. Hence 
$$
\Adm^J(\mu) \, \Adm^J(\nu)=W_J \Adm(\mu) \, \Adm(\nu) W_J \subseteq W_J \Adm(\mu+\nu) W_J=\Adm^J(\mu+\nu).
$$
\end{proof}

\section{Preliminary version of the vertexwise admissibility conjecture}

A preliminary version of Theorem \ref{PRS_conj} can be formulated as follows. 

\begin{theorem} \label{PRS_prelimconj}
Suppose $\mathcal J$ is a collection of subsets $J \subseteq S_{\rm aff}$, and let $K = \bigcap_{J \in \mathcal J} J$.  Then
$$
\Adm^K(\mu)=\bigcap_{J \in \mathcal J} \Adm^J(\mu).
$$
\end{theorem}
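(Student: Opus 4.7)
The plan is to deduce this immediately from the vertexwise characterization of $\Adm^J(\mu)$ supplied by Theorem \ref{adm^st-J=adm}, combined with an elementary set-theoretic manipulation of the type conditions defining $\operatorname{Perm}^{st,J}(\mu)$.

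The containment $\Adm^K(\mu) \subseteq \bigcap_{J \in \mathcal J} \Adm^J(\mu)$ is the easy direction: since $K \subseteq J$ for every $J \in \mathcal J$, we have $W_K \subseteq W_J$, whence
\[
\Adm^K(\mu) = W_K\, \Adm(\mu)\, W_K \;\subseteq\; W_J\, \Adm(\mu)\, W_J = \Adm^J(\mu)
\]
for every $J \in \mathcal J$.

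For the reverse inclusion, I would invoke Theorem \ref{adm^st-J=adm} to rewrite $\Adm^J(\mu)$ as $\operatorname{Perm}^{st,J}(\mu)$ for every $J$, including $J = K$. By Definition \ref{J_defs}(\ref{perm_st-J_def}), $\operatorname{Perm}^{st,J}(\mu)$ consists of those $x \in W_{\rm aff}\tau_\mu$ such that $x(a) \in B(t_{w\mu}(a), w)$ for every $w \in W$ and every $a \in {\rm Vert}(\bar{\bf a})$ whose type lies in the complement $S_{\rm aff} \setminus J$. Taking intersections over $J \in \mathcal J$ means requiring the same condition for $a$ of type lying in
\[
\bigcup_{J \in \mathcal J} (S_{\rm aff} \setminus J) \;=\; S_{\rm aff} \setminus \bigcap_{J \in \mathcal J} J \;=\; S_{\rm aff} \setminus K.
\]
That is exactly the condition defining $\operatorname{Perm}^{st,K}(\mu)$, so $\bigcap_{J \in \mathcal J} \operatorname{Perm}^{st,J}(\mu) = \operatorname{Perm}^{st,K}(\mu)$. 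Applying Theorem \ref{adm^st-J=adm} once more converts this equality of parahoric strong permissibility sets into the desired equality of parahoric admissibility sets.

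The main obstacle has been overcome already in Theorem \ref{adm^st-J=adm}: once that vertexwise description of $\Adm^J(\mu)$ is available, intersections behave well because the defining conditions are vertexwise. Without it, one would have to manipulate the Bruhat-theoretic definition of $\Adm^J(\mu)$ directly, which is not obviously compatible with arbitrary intersections $K = \bigcap_J J$ (this is essentially the problem behind Proposition \ref{Deodhar} and its use in $\S\ref{order_sec}$).
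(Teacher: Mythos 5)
Your proposal is correct and follows essentially the same route as the paper: both directions reduce, via Theorem \ref{adm^st-J=adm}, to the definitional identity $\bigcap_{J \in \mathcal J} {\rm Perm}^{st,J}(\mu) = {\rm Perm}^{st,K}(\mu)$, which is exactly the De Morgan computation on vertex types that you spell out. The paper's proof is the same argument stated more tersely (it leaves the easy containment and the set-theoretic manipulation implicit).
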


\begin{proof}
Suppose $x \in {\rm Adm}^J(\mu)$ for all $J \in \mathcal J$. By Theorem \ref{adm^st-J=adm}, we have $x \in {\rm Perm}^{st,J}(\mu)$ for all $J$. But by definition
$$
\bigcap_{J \in \mathcal J} {\rm Perm}^{st,J}(\mu) = {\rm Perm}^{st,K}(\mu)
$$
and so $x \in {\rm Perm}^{st,K}(\mu)$. Then $x \in {\rm Adm}^K(\mu)$, again by Theorem \ref{adm^st-J=adm}.
\end{proof}

\begin{Remark}
The special case where $K = \emptyset$ can be proved without using any of the material of $\S\ref{order_sec}$, as follows. Suppose $x \in {\rm Adm}^J(\mu)$ for all $J$. Then $x^J \in \widetilde{W}^J \cap {\rm Adm}(\mu)$ by  Proposition \ref{adm-J}.  Hence $x^J \in {\rm Perm}^{st}(\mu)$ by Remark \ref{simpleThm1_rem}. Thus for every $a \in {\rm Vert}(\bar{\bf a}_J)$, $x(a) = x^J(a)$ satisfies the strong permissibility condition. As this holds for every $J \in \mathcal J$ and $\cap_{J \in \mathcal J} J = \emptyset$, we deduce that $x \in {\rm Perm}^{st}(\mu)$. But then $x \in {\rm Adm}(\mu)$, again by Remark \ref{simpleThm1_rem}.
\end{Remark}

\section{The vertexwise admissibility conjecture} \label{locmod_sec}

\subsection{Notation}
We will follow the notation and set-up of \cite[$\S4.1-4.5$]{PRS}. Let $F$ be a nonarchimedean local field, and let $L$ denote the completion of the maximal unramified extension inside some separable closure $F^{\rm sep}$ of $F$. Set $I := {\rm Gal}(L^{\rm sep}/L)$. Let $G$ be a connected reductive group defined over $L$, and let $S \subset G$ be a maximal $L$-split torus with centralizer $T$; recall $T$ is a maximal torus since $G$ is quasi-split over $L$ by Steinberg's theorem. Let $W = N(L^{\rm sep})/T(L^{\rm sep})$ be the absolute Weyl group of $(G,T)$; here $N = {\rm Norm}_G(T)$.   Let $W_0 = N(L)/T(L)$ be the relative Weyl group.

Recall that associated to $G \supset T$ there is a reduced root datum $\Sigma = (X^*, X_*, R, R^\vee)$, whose affine Weyl group can be identified with the Iwahori-Weyl group over $L$ of the simply-connected group $G_{\rm sc}$: $W_{\rm aff}(\Sigma) = \widetilde{W}(G_{\rm sc})$ (see \cite[p.~195]{HR1}). The root system $\Sigma$ gives the notion of affine hyperplane and alcove in the apartment $V := X_*(T)_I \otimes \mathbb R = X_*(S) \otimes \mathbb R$ (as usual we identify the apartment for $S$ with $V$ by choosing, once and for all, a special vertex of the apartment). We also fix the notion of a positive Weyl chamber in $V$ (corresponding to a Borel subgroup $B \supset T$ defined over $L$), and let $\bar{\bf a} \subset V$ be the alcove in the opposite Weyl chamber whose closure contains our special vertex.

There are two decompositions of the Iwahori-Weyl group over $L$: $\widetilde{W}(G) = X_*(T)_I \rtimes W_0$, and $\widetilde{W}(G) = W_{\rm aff}(\Sigma) \rtimes \Omega$, where $\Omega \subset \widetilde{W}(G)$ is the stabilizer of the alcove $\bar{\bf a}$ for the natural action of $\widetilde{W}(G)$ on $V$. The torsion subgroup $X_*(T)_{I, \rm tors} \subset X_*(T)_I$ lies in $\Omega$, and in fact belongs to the center of $\widetilde{W}(G)$. This follows from the fact that $W_0$ acts trivially on $X_*(T)_{I, \rm tors}$: if $w \in W_0$ and $\lambda \in X_*(T)_{I, \rm tors}$, then $w\lambda - \lambda \in Q^\vee(\Sigma) \cap X_*(T)_{I,\rm tors} = 0$, where $Q^\vee(\Sigma)$ is the coroot lattice for  $\Sigma$. (We are using \cite[Lemma 15]{HR1}, which identifies $Q^\vee(\Sigma)$ with the subgroup $X_*(T_{\rm sc})_I$ of $X_*(T)_I$ and thus shows that $Q^\vee(\Sigma)$ is torsion-free, because $G_{\rm sc}$ is quasi-split over $L$ by a theorem of Steinberg and thus $X_*(T_{\rm sc})$ has a basis permuted by $I$.)

We write a superscript ``$\flat$'' to designate the result of quotienting by the torsion group $X_*(T)_{I, \rm tors}$, so that we have decompositions $\widetilde{W}(G)^\flat = X_*(T)^\flat_I \rtimes W_0 = W_{\rm aff}(\Sigma) \rtimes \Omega^\flat$. The root datum $\Sigma = (X^*, X_*, R, R^\vee)$ has $X_* = X_*(T)^\flat_{I}$ and extended affine Weyl group $\widetilde{W}(G)^\flat$. Let $x^\flat$ denote the image of $x \in \widetilde{W}(G)$ under $\widetilde{W}(G) \twoheadrightarrow \widetilde{W}(G)^\flat$.

\subsection{Proof of the vertexwise admissibility conjecture}
 Let $\{ \mu \}$ be the $G$-conjugacy class of a 1-parameter subgroup $\mu \in X_*(G)$; we may view this as an element of $X_*(T)/ W$. Following \cite[$\S4.3$]{PRS}, let $\widetilde{\Lambda}_{\{\mu\}} \subset \{ \mu \}$ be the subset of characters which are $B$-dominant for some Borel subgroup $B \supset T$ which is defined over $L$; this is a single $W_0$-conjugacy class. Let $\Lambda_{\{\mu\}}$ be the image of $\widetilde{\Lambda}_{\{\mu\}}$ in $X_*(T)_I$. Write $W_J \subseteq W_{\rm aff}(\Sigma)$ for the subgroup generated by a subset $J \subseteq S_{\rm aff}(\Sigma)$ of simple affine reflections. Then by definition ${\rm Adm}^J(\{\mu\}) = W_J {\rm Adm}(\{\mu\}) W_J$ where
$$
{\rm Adm}(\{\mu \}) := \{  x \in \widetilde{W}(G) ~ | ~ x \leq t_{\lambda}, \,\,\, \mbox{for some $\lambda \in \Lambda_{\{\mu \}}$} \};
$$
here $\leq$ denotes the Bruhat order defined via the decomposition $\widetilde{W}(G) = W_{\rm aff}(\Sigma) \rtimes \Omega$. 

The proof of the following lemma is left to the reader.

\begin{lemma} \label{flat_lem}
\begin{enumerate}
\item[(i)] Fix $\tau \in \Omega$ and suppose $x, y \in W_{\rm aff}(\Sigma)\tau \subset \widetilde{W}(G)$. Then $x \leq y \Leftrightarrow x^\flat \leq y^\flat$.
\item[(ii)] Suppose $\Lambda_{\{\mu \}} \subset W_{\rm aff}(\Sigma)\tau$ for $\tau \in \Omega$. Then for each subset $J$ as above ${\rm Adm}^J(\{\mu \})$ is the preimage of ${\rm Adm}^J(\Lambda^\flat_{\{\mu\}})$ under the map $W_{\rm aff}(\Sigma)\tau \rightarrow \widetilde{W}(G)^\flat$. Here ${\rm Adm}^J(\Lambda^\flat_{\{\mu\}})$ is the admissible set associated to the reduced root datum $\Sigma = (X^*, X_*, R, R^\vee)$ and $\Lambda^\flat_{\{\mu\}} \subset X_* =  X_*(T)^\flat_I$.
\end{enumerate}
\end{lemma}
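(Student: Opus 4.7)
The plan is to reduce both assertions to a single structural point: the quotient $\widetilde{W}(G) \twoheadrightarrow \widetilde{W}(G)^\flat$ has kernel $X_*(T)_{I,\rm tors}$, which sits inside $\Omega$ and not inside $W_{\rm aff}(\Sigma)$. The two semidirect decompositions $\widetilde{W}(G) = W_{\rm aff}(\Sigma) \rtimes \Omega$ and $\widetilde{W}(G)^\flat = W_{\rm aff}(\Sigma) \rtimes \Omega^\flat$ therefore share a common $W_{\rm aff}(\Sigma)$-factor, with the quotient map acting only on the $\Omega$-factor. Since the Bruhat order on an extended affine Weyl group is defined entirely via the $W_{\rm aff}(\Sigma)$-component of a fixed $W_{\rm aff}(\Sigma)$-coset, and the admissible set lies in a single such coset, the matching should be on the nose.

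For part (i), I would write $x = w\tau$ and $y = w'\tau$ for the same $\tau \in \Omega$ and $w, w' \in W_{\rm aff}(\Sigma)$. By the definition of the Bruhat order recalled in Section~\ref{notation_sec}, $x \leq y$ if and only if $w \leq w'$ in the Coxeter group $W_{\rm aff}(\Sigma)$. Passing to $\widetilde{W}(G)^\flat$, we have $x^\flat = w\tau^\flat$ and $y^\flat = w'\tau^\flat$, which lie in the common coset $W_{\rm aff}(\Sigma)\tau^\flat$; the same definition says $x^\flat \leq y^\flat$ is equivalent to $w \leq w'$. Both conditions collapse to the same inequality in $W_{\rm aff}(\Sigma)$, which yields (i).

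For part (ii), I would first observe that the hypothesis $\Lambda_{\{\mu\}} \subset W_{\rm aff}(\Sigma)\tau$ gives $t_\lambda \in W_{\rm aff}(\Sigma)\tau$ for every $\lambda \in \Lambda_{\{\mu\}}$, so $\Adm(\{\mu\}) \subset W_{\rm aff}(\Sigma)\tau$. Because $W_J \subset W_{\rm aff}(\Sigma)$, left and right multiplication by $W_J$ preserves this single coset, so $\Adm^J(\{\mu\}) \subset W_{\rm aff}(\Sigma)\tau$ as well, and the ``preimage'' formulation makes sense. On the target, $(t_\lambda)^\flat = t_{\lambda^\flat}$, and the inclusion $W_J \hookrightarrow W_{\rm aff}(\Sigma)$ is intertwined with the quotient map since $W_J \cap X_*(T)_{I,\rm tors} = 1$. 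Part (i), applied to elements of $W_{\rm aff}(\Sigma)\tau$, translates $x \leq t_\lambda$ into $x^\flat \leq t_{\lambda^\flat}$, so $x \in \Adm(\{\mu\})$ if and only if $x^\flat \in \Adm(\Lambda^\flat_{\{\mu\}})$. Multiplying on the left and right by $W_J$ on both sides finishes the identification of $\Adm^J(\{\mu\})$ with the preimage of $\Adm^J(\Lambda^\flat_{\{\mu\}})$.

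I do not anticipate any serious obstacle; the lemma is essentially a compatibility check between two parallel semidirect-product decompositions whose shared factor $W_{\rm aff}(\Sigma)$ carries all the combinatorial data behind both the Bruhat order and the admissible set. The only point requiring a moment's care is that passage to $\flat$ commutes with the two-sided action of $W_J$, which is immediate from the fact that $W_J$ is a subgroup of the common $W_{\rm aff}(\Sigma)$ factor and hence maps identically under the projection to $\widetilde{W}(G)^\flat$.
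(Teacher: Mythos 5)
Your proposal is correct, and it is the argument the paper evidently intends: the paper explicitly leaves this lemma to the reader, and the routine verification is exactly what you give — the kernel $X_*(T)_{I,\rm tors}$ of $\widetilde{W}(G)\twoheadrightarrow\widetilde{W}(G)^\flat$ lies in $\Omega$, so the projection is the identity on the common Coxeter factor $W_{\rm aff}(\Sigma)$ (with the same $S_{\rm aff}(\Sigma)$), whence the Bruhat orders match coset by coset and $\Adm^J$ is carried over after noting $(t_\lambda)^\flat=t_{\lambda^\flat}$ and that the two-sided $W_J$-action commutes with $\flat$. The only point you leave slightly implicit is the reverse inclusion for general $J$ (given $x^\flat=ubv$ with $u,v\in W_J$, $b\in\Adm(\Lambda^\flat_{\{\mu\}})$, set $a=u^{-1}xv^{-1}\in W_{\rm aff}(\Sigma)\tau$ and apply the $J=\emptyset$ case to $a$), which is immediate from what you have already established.
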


Now Theorem \ref{PRS_conj} may be reformulated as follows.

\begin{theorem} \label{PRS_conjfinal}
Suppose $\mathcal J$ is a collection of subsets $J \subseteq S_{\rm aff}(\Sigma)$, and let $K = \bigcap_{J \in \mathcal J} J$.  Then
$$
\Adm^K(\{\mu\})=\bigcap_{J \in \mathcal J} \Adm^J(\{\mu\}).
$$
\end{theorem}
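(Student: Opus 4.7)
The plan is to reduce Theorem \ref{PRS_conjfinal} to Theorem \ref{PRS_prelimconj} via the $\flat$-map and Lemma \ref{flat_lem}. First I would verify that $\Lambda_{\{\mu\}}$ lies in a single coset $W_{\rm aff}(\Sigma)\tau$ for some $\tau \in \Omega$, which is the hypothesis needed for Lemma \ref{flat_lem}(ii). This follows because $\Lambda_{\{\mu\}}$ is a single $W_0$-orbit, and, viewing $W_0$ as the subgroup of $\widetilde{W}(G)$ fixing our chosen special vertex, we have $W_0 \subset W_{\rm aff}(\Sigma)$. Hence the translations $t_\lambda$ with $\lambda \in \Lambda_{\{\mu\}}$ are all $W_{\rm aff}(\Sigma)$-conjugate and so lie in a common coset $W_{\rm aff}(\Sigma)\tau$.

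Next, Lemma \ref{flat_lem}(ii) exhibits each $\Adm^J(\{\mu\})$ as the preimage under the map $W_{\rm aff}(\Sigma)\tau \to \widetilde{W}(G)^\flat$ of $\Adm^J(\Lambda^\flat_{\{\mu\}})$. Since formation of preimages commutes with arbitrary intersections,
$$
\bigcap_{J \in \mathcal J} \Adm^J(\{\mu\}) \text{ is the preimage of } \bigcap_{J \in \mathcal J} \Adm^J(\Lambda^\flat_{\{\mu\}}).
$$
Now $\Lambda^\flat_{\{\mu\}}$ is a single $W_0$-orbit in $X_* = X_*(T)^\flat_I$, and because $W_0$ coincides with the Weyl group $W$ of the reduced root datum $\Sigma$, this orbit is $W\mu_0$ for a unique $\mu_0 \in X_* \cap cl(\mathcal C)$. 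Unwinding the definitions shows that $\Adm^J(\Lambda^\flat_{\{\mu\}})$ equals $\Adm^J(\mu_0)$ in the notation of Section \ref{notation_sec}.

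Applying Theorem \ref{PRS_prelimconj} to $\mu_0$ gives $\bigcap_{J \in \mathcal J} \Adm^J(\mu_0) = \Adm^K(\mu_0)$. Pulling this equality back through the $\flat$-map and invoking Lemma \ref{flat_lem}(ii) on the right-hand side identifies the resulting set as $\Adm^K(\{\mu\})$, which finishes the argument. The main obstacle is bookkeeping: carefully reconciling the two flavors of extended affine Weyl group (for $G$ versus for the reduced root datum $\Sigma$) and checking that Bruhat orders and admissible sets transport along the $\flat$-map as claimed in Lemma \ref{flat_lem}. The genuine mathematical content lies in Theorem \ref{PRS_prelimconj}, and ultimately in Theorem \ref{adm^st-J=adm}.
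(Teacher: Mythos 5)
Your proposal is correct and follows the same route as the paper: reduce to Theorem \ref{PRS_prelimconj} via Lemma \ref{flat_lem} and the fact that preimages commute with intersections. Your extra checks (that $\Lambda_{\{\mu\}}$ lies in a single coset $W_{\rm aff}(\Sigma)\tau$ because $W_0\subset W_{\rm aff}(\Sigma)$, and that $\Lambda^\flat_{\{\mu\}}=W\mu_0$ for a dominant $\mu_0$) are exactly the bookkeeping the paper leaves implicit.
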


\begin{proof}
Using Lemma \ref{flat_lem}, the desired formula follows by applying the inverse image of $W_{\rm aff}(\Sigma) \tau \rightarrow \widetilde{W}(G)^\flat$ to the formula
$$
\Adm^K(\Lambda^\flat_{\{\mu \}}) = \bigcap_{J \in \mathcal J} \Adm^J(\Lambda^\flat_{\{\mu\}})
$$
proved in Theorem \ref{PRS_prelimconj}.
\end{proof}

\begin{Remark} \label{final_rem} Theorem \ref{PRS_conj} has implications for local models. For example, as in \cite[$\S4$]{PR} consider the morphism of naive local models associated to $(G, \{\mu \})$ where $G$ is a general unitary group
$$
\pi_{K,i} : M^{\rm naive}_K \rightarrow M^{\rm naive}_{\{i\}}
$$
for $i \in K$.  We take the point of view that defines the ``true'' local model $M^{\rm loc}_K$ as the scheme-theoretic closure of the generic fiber of $M^{\rm naive}_K$ in the scheme $M^{\rm naive}_K$. Then the coherence conjecture (now a theorem due to X.~Zhu \cite{Zhu}) together with Theorem \ref{PRS_conj} implies that we have an equality of schemes
$$
M^{\rm loc}_K = \bigcap_{i \in K} \pi^{-1}_{K, \{i\}}\big( M^{\rm loc}_{\{i\}} \big)
$$
(the intersection taken in $M^{\rm naive}_K$).  This shows that the local model attached to any facet can be recovered from local models attached to {\em vertices}. The proof is given in \cite[Prop.~4.5]{PR}.  
\end{Remark}

\bigskip

\noindent {\em Acknowledgements.} This work was written during the Fall 2014 MSRI program on Geometric Representation Theory. The first author thanks MSRI for hosting the program and for providing financial support for his visit. It is a pleasure to thank Michael Rapoport and Robert Kottwitz for several stimulating conversations about these questions.

\end{document}